\newtheorem{theorem}{Theorem}[section]
\newtheorem{corollary}[theorem]{Corollary}
\newtheorem{lemma}[theorem]{Lemma}
\newtheorem{proposition}[theorem]{Proposition}
\theoremstyle{definition}
\newtheorem{definition}[theorem]{Definition}
\newtheorem{example}[theorem]{Example}
\newtheorem{remark}[theorem]{Remark}
\numberwithin{equation}{section}
\begin{document}
\title[Follower, Predecessor, and Extender Set Sequences of $\beta$-Shifts]{Follower, Predecessor, and Extender Set Sequences of $\beta$-Shifts}

\begin{abstract}

Given a one-dimensional shift $X$, let $|F_X(n)|$ be the number of follower sets of words of length $n$ in $X$, and $|P_X(n)|$ be the number of predecessor sets of words of length $n$ in $X$. We call the sequence $\{|F_X(n)|\}_{n \in \mathbb{N}}$ the follower set sequence of the shift $X$, and $\{|P_X(n)|\}_{n \in \mathbb{N}}$ the predecessor set sequence of the shift $X$. Extender sets are a generalization of follower sets (see ~\cite{KassMadden}), and we define the extender set sequence similarly. In this paper, we examine achievable differences in limiting behavior of follower, predecessor, and extender set sequences. This is done through the classical $\beta$-shifts, first introduced in \cite{Renyi}. We show that the follower set sequences of $\beta$-shifts must grow at most linearly in $n$, while the predecessor and extender set sequences may demonstrate exponential growth rate in $n$, depending on choice of $\beta$.
\end{abstract}

\date{}
\author{Thomas French}
\address{Thomas French\\
Department of Mathematics\\
University of Denver\\
2280 S. Vine St.\\
Denver, CO 80208}
\email{Thomas.French@du.edu}

\subjclass[2010]{Primary: 37B10}
\maketitle

\section{Introduction}
\label{intro}

For any $\mathbb{N}$ or $\mathbb{Z}$ shift $X$ and finite word $w$ appearing in some point of $X$, the \textbf{follower set} of $w$, written $F_X(w)$, is defined as the set of all words $u$ such that the word $wu$ occurs in some point of $X$. The \textbf{predecessor set} of $w$, written $P_X(w)$, is the set of all words $s$ such that the word $sw$ occurs in some point of $X$. The \textbf{extender set} of $w$, written $E_X(w)$, is the set of all pairs of words $(s,u)$ such that the word $swu$ occurs in some point of $X$. It is well-known that for a $\mathbb{Z}$ shift $X$, finiteness of $\{F_X(w) \ | \ w \text{ in the language of $X$} \}$ is equivalent to $X$ being sofic, that is, the image of a shift of finite type under a continuous shift-commuting map. (see ~\cite{LindMarcus}) (In fact it is true that finiteness of $\{E_X(w) \ | \ w \text{ in the language of $X$}\}$ is equivalent to $X$ being sofic as well, see ~\cite{OrmesPavlov}).

We define the set $F_X(n)$ to be $\{F_X(w) \ | \ w \text{ has length }n\}$ for any positive integer $n$. Thus $|F_X(n)|$ is the total number of distinct follower sets which correspond to some word $w$ of length $n$ in $X$. $|P_X(n)|$ and $|E_X(n)|$ are defined similarly for predecessor and extender sets. Since the alphabet is finite, there are only finitely many words of a given length $n$, and so for any shift (sofic or not), $|F_X(n)|, |P_X(n)|$ and $|E_X(n)|$ are finite for every $n$. If $X$ is sofic, $\{F_X(w) \ | \ w \text{ in the language of $X$} \}$ is finite, and thus the follower set sequence $\{|F_X(n)|\}_{n \in \mathbb{N}}$ must be bounded, and similarly for the predecessor set sequence $\{|P_X(n)|\}_{n \in \mathbb{N}}$ and extender set sequence $\{|E_x(n)|\}_{n \in \mathbb{N}}$. 

In \cite{French}, the author showed that these sequences can exhibit surprising behavior. In the sofic case, these sequences may be eventually periodic rather than eventually constant, and the period, as well as the gaps between values, can be arbitrarily prescribed. The author also showed the existence of a non-sofic shift whose follower and extender set sequences were not monotone increasing.

In all of these examples, the follower, predecessor, and extender set sequences behaved similarly, oscillating with the same periods, or in the non-sofic case, exhibiting the same growth rates. In this paper we establish that this is not always the case, demonstrating examples in which the follower set sequence displays different limiting behavior than the predecessor and extender set sequences. We do this by use of the classical $\beta$-shifts, introduced in ~\cite{Renyi}. We show that the follower set sequence must grow linearly, while the predecessor and extender set sequence may grow exponentially.

We also show a wide class of complexity sequences which may be realized as predecessor set sequences of shift spaces. To do this, we prove that for any right-infinite sequence $d$ with complexity sequence $\Phi_n(d)$, the sequence $\{\Phi_n(d)+1\}$ may be realized as the predecessor set sequence of a $\beta$-shift for some $\beta$.

\section{Definitions and preliminaries}
\label{defns}
Let $A$ denote a finite set, which we will refer to as our alphabet. 

\begin{definition}
A \textbf{word} over $A$ is a member of $A^n$ for some $n \in \mathbb{N}$. We denote the length of a word $w$ by $|w|$.
\end{definition}

\begin{definition} For any words $v \in A^n$ and $w \in A^m$, we define the \textbf{concatenation} $vw$ to be the pattern in $A^{n+m}$ whose first $n$ letters are the letters forming $v$ and whose next $m$ letters are the letters forming $w$.
\end{definition}

\begin{definition} A word $w$ is a \textbf{prefix} of a right-infinite sequence $z$ if the first $|w|$-many letters of $z$ are the letters forming $w$. We denote the n-letter prefix of a sequence $z$ by $(z)_n$.
\end{definition}

\begin{definition} 
The \textbf{language} of a $\mathbb{Z}$ shift $X$, denoted by $L(X)$, is the set of all words which appear in points of $X$. For any finite $n \in \mathbb{N}$, $L_n(X) := L(X) \cap A^n$, the set of words in the language of $X$ with length $n$. The \textbf{complexity sequence} of a $\mathbb{Z}$ shift $X$ is $\Phi_n(X) = |L_n(X)|$ for every $n \in \mathbb{N}$. That is, the complexity sequence is the sequence which records the number of words of length $n$ appearing in some point of $X$ for every length $n$.
\end{definition}

\begin{definition}
For any one-dimensional shift $X$ over the alphabet $A$, and any word $w$ in the language of $X$, we define the \textbf{follower set of $w$ in $X$}, $F_X(w)$, to be the set of all finite words $u \in L(X)$ such that the word $wu$ occurs in some point of $X$. The \textbf{predecessor set of $w$ in $X$}, $P_X(w)$, is defined to be the set of all finite words $s \in L(X)$ such that the word $sw$ occurs in some point of $X$. In some works, the follower and predecessor sets have been defined to be the set of all one-sided infinite sequences (in $A^{\mathbb{N}}$ or $A^{-\mathbb{N}}$ for followers and predecessors, respectively) which may follow/precede $w$. This definition is equivalent for followers, and in the case of a two-sided shift, for predecessors as well. For a one-sided shift, of course, no infinite sequence may precede a word $w$. The results of this paper will apply for either definition in any case where either definition makes sense.
\end{definition}

\begin{definition}
For any one-dimensional shift $X$ over the alphabet $A$, and any word $w$ in the language of $X$, we define the \textbf{extender set of w in $X$}, $E_X(w)$, to be the set of all pairs $(s, u)$ where $s, u \in L(X)$ and the word $swu$ occurs in some point of $X$. Again, a definition replacing finite words with infinite sequences is equivalent in the two-sided case.
\end{definition}

\begin{remark}
For any word $w \in L(X)$, define a projection function $f_w:E_X(w) \rightarrow F_X(w)$ by $f(s,u) = u$. Such a function sends the extender set of $w$ onto the follower set of $w$. Any two words $w, v$ with the same extender set would have the property then that $f_w(E_X(w))= f_v(E_X(v))$, that is, that $w$ and $v$ have the same follower set. Similarly, words which have the same extender set also have the same predecessor set.
\end{remark}

\begin{definition}
For any positive integer $n$, define the set $F_X(n) = \{F_X(w) \ | \ w \in L_{n}(X)\}$. Thus the cardinality $|F_X(n)|$ is the number of distinct follower sets of words of length $n$ in $X$. Similarly, define $P_X(n) = \{P_X(w) \ | \ w \in L_n(X)\}$ and $E_X(n) = \{E_X(w) \ | \ w \in L_n(X)\}$, so that $|P_X(n)|$ is the number of distinct predecessor sets of words of length $n$ in $X$ and $|E_X(n)|$ is the number of distinct extender sets of words of length $n$ in $X$.
\end{definition}

\begin{definition}
Given a shift $X$, the \textbf{follower set sequence of $X$} is the sequence $\{|F_X(n)|\}_{n \in \mathbb{N}}$. The \textbf{predecessor set sequence of $X$} is the sequence $\{|P_X(n)|\}_{n \in \mathbb{N}}$. The \textbf{extender set sequence of $X$} is the sequence $\{|E_X(n)|\}_{n \in \mathbb{N}}$.
\end{definition}

\begin{example}
Let $X$ be a full shift on the alphabet $A$. Then any word $w \in L(X)$ may be followed legally by any word in $A^{\mathbb{N}}$, and thus the follower set of any word is the same. Hence there is only one follower set in a full shift. Similarly, there is only one predecessor and one extender set in a full shift. Then $\{|F_X(n)|\}_{n \in \mathbb{N}} = \{|P_X(n)|\}_{n \in \mathbb{N}} = \{|E_X(n)|\}_{n \in \mathbb{N}} = \{1, 1, 1, ...\}$.
\end{example}

\begin{example}
The even shift is the one-dimensional sofic shift with alphabet $\{0,1\}$ defined by forbidding odd runs of zeros between ones. It is a simple exercise to show that the even shift has three follower sets, $F(0), F(1),$ and $F(10)$. The follower set sequence of the even shift is $\{|F_X(n)|\}_{n \in \mathbb{N}} = \{2, 3, 3, 3, ...\}$. It is easy to verify that for any word $w$ in the language of the even shift, the follower set of $w$ is identical to the follower set of one of these three words.
\end{example}

\begin{definition} 
A one-dimensional shift $X$ is a \textbf{shift of finite type} if there exists a finite collection of words $\mathcal{F}$ such that a point $x$ is in $X$ if no word from $\mathcal{F}$ ever appears in $x$.
\end{definition}

\begin{definition}
A one-dimensional shift $X$ is \textbf{sofic} if it is the image of a shift of finite type under a continuous shift-commuting map. 
\end{definition}

Equivalently, a shift $X$ is sofic iff there exists a finite directed graph $\mathcal{G}$ with labeled edges such that the points of $X$ are exactly the sets of labels of infinite (or in the two-sided case, bi-infinite) walks on $\mathcal{G}$. Then $\mathcal{G}$ is a \textbf{presentation} of $X$ and we say $X = X_\mathcal{G}$ (or that $X$ is the \textbf{edge shift} presented by $\mathcal{G}$). Another well-known equivalence is that sofic shifts are those with only finitely many follower sets, that is, a shift $X$ is sofic iff $\{F_X(w) \ | \ w \text{ in the language of $X$} \}$ is finite. The same equivalence exists for extender sets: X is sofic iff $\{E_X(w) \ | \ w \text{ in the language of $X$} \}$ is finite. (see ~\cite{OrmesPavlov}) This necessarily implies that for a sofic shift $X$, the follower set sequence and extender set sequence of $X$ are bounded. In fact, the converse is also true: if the follower set or extender set sequence of a shift $X$ is bounded, then $X$ is necessarily sofic. (See ~\cite{OrmesPavlov})

\begin{definition}
A word $v$ is \textbf{synchronizing} if for any words $u$ and $w$ such that $uv$ and $vw$ are both in the language of $X$, the word $uvw$ is also in the language of $X$. For a sofic shift, there exists a presentation $\mathcal{G}$ of $X$ such that every path labeled $v$ terminates at the same vertex.
\end{definition}

\begin{definition}
Suppose there is an order $<$ on the alphabet $\mathcal{A}$. Given two distinct words of the same length $x= x_1x_2...x_k$ and $y = y_1y_2...y_k$, we say that $x$ is \textbf{lexicographically less than} $y$ (denoted $x \prec y$) if there exists $1 \leq i \leq k$ such that for all $j <i$, $x_j = y_j$, but $x_i < y_i$. That is, at the first place where $x$ and $y$ disagree, $x$ takes a lesser value than $y$. We may also compare two right-infinite sequences using the lexicographic order.
\end{definition}

\begin{definition}
Two-sided shifts may be constructed from one-sided shifts using \textbf{the natural extension}: Given a one-sided shift $X$, create a two-sided shift $\widehat{X}$ by asserting that a bi-infinite sequence $x$ is in $\widehat{X}$ if and only if every finite subword of $x$ is in $L(X)$.
\end{definition}

\section{Follower, Predecessor, and Extender Set Sequences of $\beta$-shifts}
\label{Beta}

Given $\beta > 1$, consider the map $T_\beta: [0,1) \rightarrow [0,1)$ given by $T_\beta(x) = \beta x \pmod 1$. Let $d_\beta:[0,1) \rightarrow \{ \lfloor \beta \rfloor +1\}^\mathbb{N}$ be the map which sends each point $x \in [0,1)$ to its expansion in base $\beta$. That is, if $\displaystyle x = \sum_{n=1}^\infty \frac{x_n}{\beta^n}$, then $d_\beta(x) = .x_1x_2x_3...$. (In the case where $x$ has more than one $\beta$ expansion, we take the lexicographically largest expansion.) The closure of the image, $\overline{d_\beta([0,1))}$, is a one-sided symbolic dynamical system called the $\beta$-shift. (Introduced in \cite{Renyi}). It is easily established that the $\beta$-shift is measure-theoretically conjugate to the map $T_\beta$: If we partition $[0,1)$ into the intervals $[\frac{k}{\beta}, \frac{k+1}{\beta})$ for $k = 0, 1, ... , \lfloor \beta \rfloor -1$, along with $[\frac{\lfloor \beta \rfloor}{\beta} , 1)$, labeling each interval with its corresponding $k$, then for a given $x \in [0,1)$, $x_i$ will be the label of the partition element in which $T_\beta^i(x)$ resides. 

An equivalent characterization of the $\beta$-shift is given by a right-infinite sequence $d_\beta^* (1) = \displaystyle \lim_{x \nearrow 1} d_\beta(x)$. For any sequence $x$ on the alphabet $\{0, ..., \lfloor \beta \rfloor \}$, $x \in X_\beta$ if and only if every shift of $x$ is lexicographically less than or equal to $d_\beta^* (1)$ (see ~\cite{Parry}). Then clearly, the sequence $d_\beta^* (1)$ has the property that every shift of $d_\beta^* (1)$ is lexicographically less than or equal to $d_\beta^* (1)$. The sequence $d_\beta(1)$ only terminates with an infinite string of 0's in the case that $X_\beta$ is a shift of finite type, however, the sequence $d_\beta^*(1)$ never terminates with an infinite string of 0's. Indeed, $d_\beta^*(1) \neq d_\beta(1)$ if and only if $X_\beta$ is a shift of finite type (see ~\cite{Blanchard}).

\begin{example}\label{GMS}
The golden mean shift $X_\varphi$, the shift on alphabet $\{0,1\}$ in which the word 11 may not appear, considered as a one-sided shift, is the $\beta$-shift corresponding to $\beta = \varphi = \frac{1+\sqrt{5}}{2}$. Since the golden mean shift is a shift of finite type, we should have that $d_\varphi(1)$ terminates in an infinite string of 0's, and $d_\varphi^*(1) \neq d_\varphi(1)$. Indeed, $d_\varphi(1) = .11000000...$, and $d_\varphi^*(1) = .1010101010...$ The reader may check that the requirement that every shift of a right infinite sequence be lexicographically less than or equal to $d_\varphi^*(1)$ is equivalent to the requirement that the right-infinite sequence never see the word 11.
\end{example} 

In fact, any right-infinite sequence $d$ satisfying the two properties we have discussed must be equal to $d_\beta^*(1)$ for some $\beta$:

\begin{lemma}\label{twoprops}
Let $d$ be a one-sided right-infinite sequence on the alphabet $\mathcal{A} = \{0, 1, ..., k\}$ (with $k$ occurring in $d$) such that every shift of $d$ is lexicographically less than or equal to $d$ and $d$ does not end with an infinite string of zeros. Then there exists $k < \beta \leq k+1$ such that $d = d_\beta^*(1)$.
\end{lemma}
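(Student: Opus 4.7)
The plan is to locate $\beta$ via an intermediate-value argument on the partial sums of $d$, and then identify $d$ with the quasi-greedy expansion of $1$ in base $\beta$. First I observe that $d_1 = k$: if $k$ occurs at position $i+1$ of $d$, then $\sigma^i d$ begins with $k$, and the shift-maximality $\sigma^i d \preceq d$ forces $d_1 \geq k$. Now set
\[
F(\beta) := \sum_{n \geq 1} \frac{d_n}{\beta^n}.
\]
Then $F$ is continuous and strictly decreasing on $(1, \infty)$. The crude bound $d_n \leq k$ gives $F(k+1) \leq k\sum_{n\geq 1}(k+1)^{-n}=1$, with equality iff $d = k^\infty$; in that case $\beta = k+1$ is forced, and $d_\beta^*(1) = k^\infty = d$ handles the situation trivially. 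Otherwise $F(k+1) < 1$, while $F(\beta) > 1$ for $\beta$ near $k$: when $k \geq 2$, $F(k) \geq 1 + \sum_{n \geq 2} d_n/k^n > 1$ because $d$ does not end in $0^\infty$; when $k = 1$, $F(\beta) \to \infty$ as $\beta \to 1^+$ because $d$ has infinitely many $1$'s. Continuity and strict monotonicity yield a unique $\beta \in (k, k+1]$ with $F(\beta) = 1$.

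The main step is to prove the following partial-sum bound: for this $\beta$ and every $j \geq 0$, $m \geq 1$,
\[
P_m(\sigma^j d) := \sum_{i=1}^m \frac{d_{j+i}}{\beta^i} < 1.
\]
I would do this by contradiction, selecting a hypothetical violator $(j, m)$ with $m$ minimal. For $j = 0$, $P_m(d) < 1$ is immediate from $F(\beta) = 1$ together with strict positivity of the tail $\sum_{i > m} d_i/\beta^i$. For $j \geq 1$, shift-maximality yields $d_{j+1}\cdots d_{j+m} \preceq d_1 \cdots d_m$ lexicographically. If the two words coincide, $P_m(\sigma^j d) = P_m(d) < 1$, contradiction. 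Otherwise let $\ell$ be the first disagreement, so $d_{j+i} = d_i$ for $i < \ell$ and $d_{j+\ell} \leq d_\ell - 1$. When $\ell = m$ one computes directly $P_m(\sigma^j d) \leq P_\ell(d) - 1/\beta^\ell < 1$. When $\ell < m$, rearranging $P_m(\sigma^j d) \geq 1$ gives
\[
P_{m-\ell}(\sigma^{j+\ell} d) \geq \beta^\ell\bigl(1 - P_\ell(d)\bigr) + 1 > 1,
\]
a strictly smaller violator in $m$, contradicting minimality. The main obstacle is precisely this step: the reflex ``lex-smaller implies numerically smaller'' fails once $\beta \in (k, k+1)$, so the purely combinatorial shift-maximality must be converted into the numerical partial-sum bound via this descent on $m$.

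Letting $m \to \infty$ in the partial-sum bound gives $r_j := \sum_{i \geq 1} d_{j+i}/\beta^i \in (0, 1]$ for every $j$, with positivity because $d$ does not end in $0^\infty$. The recursion $r_{j+1} = \beta r_j - d_{j+1}$ combined with $r_{j+1} \in (0, 1]$ pins $d_{j+1}$ to the unique nonnegative integer in the half-open interval $[\beta r_j - 1, \beta r_j)$, namely $\lceil \beta r_j \rceil - 1$; a brief case split on whether $\beta r_j \in \mathbb{Z}$ (equivalently whether $r_{j+1} = 1$) handles both branches uniformly. But $r \mapsto \lceil \beta r \rceil - 1$ is exactly the digit rule of the quasi-greedy algorithm generating $d_\beta^*(1)$ from $r_0 = 1$, so induction starting from $d_1 = k = \lceil \beta \rceil - 1$ yields $d_n = (d_\beta^*(1))_n$ for every $n$, i.e.\ $d = d_\beta^*(1)$.
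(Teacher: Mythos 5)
Your proof is correct, and while it shares the paper's top-level strategy---locate $\beta$ by the Intermediate Value Theorem and then identify $d$ with $d_\beta^*(1)$---it differs substantially in how the identification is carried out. The paper's own proof disposes of that step in one sentence: having found $\beta$ with $\sum_i d_i\beta^{-(i+1)}=1$, it asserts that since $d$ does not end in $0^\infty$ it ``cannot be the case that $d$ is an expansion which is equal to $d_\beta(1)\neq d_\beta^*(1)$,'' implicitly leaning on the known classification of shift-maximal expansions of $1$; since for a typical $\beta$ the number $1$ has many expansions, ruling out the greedy expansion in the shift-of-finite-type case is not by itself conclusive, and the real content is exactly what you prove. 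Your descent argument establishing $\sum_{i=1}^m d_{j+i}\beta^{-i}<1$ for all $j,m$ directly from shift-maximality, followed by the observation that the tails $r_j\in(0,1]$ force each digit to equal $\lceil\beta r_j\rceil-1$, converts the lexicographic hypothesis into the quasi-greedy recursion and makes the identification essentially self-contained (modulo the standard fact that $d_\beta^*(1)=\lim_{x\nearrow 1}d_\beta(x)$ is generated by that recursion from $r_0=1$, which you invoke rather than verify---a citation comparable in weight to the paper's reliance on Parry and Blanchard). You are also more careful than the paper in pinning the root of $F(\beta)=1$ inside $(k,k+1]$ at the outset and in handling the boundary case $d=k^\infty$, $\beta=k+1$, whereas the paper finds some root on $(1,\infty)$ by the Intermediate Value Theorem and derives the bounds $k<\beta\leq k+1$ afterwards. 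In short: your approach buys a complete proof of the one step the paper treats as known; the paper's approach buys brevity by deferring to the standard theory of $\beta$-expansions.
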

\begin{proof} Suppose $d = .d_0d_1d_2...$ be a sequence on the alphabet $\mathcal{A} = \{0, 1, ..., k\}$ with the symbol $k$ occuring in $d$. Let $\displaystyle 1 = \sum_{i=0}^\infty \frac{d_i}{\beta^{i+1}}$. Because not all $d_i$ are equal to zero, the equation has some solution $\beta$ by the Intermediate Value Theorem. Because $d$ does not end in an infinite string of 0's, it cannot be the case that $d$ is an expansion which is equal to $d_\beta(1) \neq d_\beta^*(1)$ as in the case where $X_\beta$ is a shift of finite type. Furthermore, the requirement that every shift of $d$ is lexicographically less than or equal to $d$ necessitates that $d_0 = k$, for if $d_0 < k$ but $d_i = k$ for some $i \neq 0$, then $\sigma^i(d) \succ d$, a contradiction. Then, multiplying both sides of the equation by $\beta$, we get 
$$\beta = \sum_{i=0}^\infty \frac{d_i}{\beta^{i}} = d_0 + \sum_{i=1}^\infty \frac{d_i}{\beta^i} = k + \sum_{i=1}^\infty \frac{d_i}{\beta^{i}}> k$$
with the final inequality being strict because not all of $\{d_i\}_{i=1}^\infty$ may be equal to 0. On the other hand,
$$\beta = \sum_{i=0}^\infty \frac{d_i}{\beta^{i}} \leq \sum_{i=0}^\infty \frac{k}{\beta^{i}} = k \sum_{i=0}^\infty \Big(\frac{1}{\beta}\Big)^i = k\Big(\frac{1}{1-\frac{1}{\beta}}\Big) = k \Big(\frac{\beta}{\beta -1}\Big)$$
as $\beta > 1$. But then
\begin{align*}
\beta &\leq k \Big(\frac{\beta}{\beta -1}\Big) \\
\beta(\beta - 1) &\leq k\beta \\
\beta - 1 &\leq k \\
\beta &\leq k + 1.
\end{align*}
\end{proof}

Moreover, $X_\beta$ is sofic if and only if $d_\beta^*(1)$ is eventually periodic (see \cite{Blanchard}). We use these facts to characterize the follower set sequences of all one-sided $\beta$-shifts:

\begin{lemma}\label{safewords}
If $w,v \in L(X_\beta)$ and $w$ does not terminate with any prefix of the sequence $d_\beta^* (1)$, then $wv \in L(X_\beta)$. Thus, $F_{X_\beta}(w) = X_\beta = F(\emptyset)$.
\end{lemma}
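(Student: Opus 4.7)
The plan is to exhibit a right-infinite sequence in $X_\beta$ beginning with $wv$, using the lexicographic characterization of $X_\beta$ (every shift $\preceq d_\beta^*(1)$). Since $v \in L(X_\beta)$, I can fix a right-infinite sequence $v\alpha$ obtained by extending $v$ within some point of $X_\beta$, so that every shift of $v\alpha$ is $\preceq d_\beta^*(1)$. The goal is then to verify that $wv\alpha$ also satisfies this condition, which would place $wv\alpha \in X_\beta$ and so $wv \in L(X_\beta)$.

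Shifts of $wv\alpha$ based at a position inside the $v\alpha$ portion are automatically $\preceq d_\beta^*(1)$ by choice of $\alpha$, so the content of the proof is to handle shifts based at positions inside $w$. For such a position, the shift is of the form $w'v\alpha$, where $w'$ is a nonempty suffix of $w$. The key intermediate claim is that for every such $w'$, one has the \emph{strict} inequality $w' \prec (d_\beta^*(1))_{|w'|}$.

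To prove the claim, use that $w \in L(X_\beta)$: the suffix $w'$ occurs in some point of $X_\beta$, so $w'$ can be extended to an infinite sequence that is $\preceq d_\beta^*(1)$ lexicographically. This forces either $w' \prec (d_\beta^*(1))_{|w'|}$ strictly, or $w' = (d_\beta^*(1))_{|w'|}$, i.e. $w'$ is a prefix of $d_\beta^*(1)$. The second alternative is precisely excluded by the hypothesis that $w$ does not terminate with any prefix of $d_\beta^*(1)$. Hence $w' \prec (d_\beta^*(1))_{|w'|}$ strictly, and the disagreement occurs within the first $|w'|$ positions, so $w'v\alpha \prec d_\beta^*(1)$ regardless of $v$ and $\alpha$.

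The main (really only) obstacle is being careful about the trichotomy in the lexicographic comparison and using the hypothesis to rule out the equality case; the rest is bookkeeping. Having established $wv \in L(X_\beta)$ for every $v \in L(X_\beta)$, we obtain $F_{X_\beta}(w) \supseteq L(X_\beta)$, and the reverse inclusion is automatic from the definition, giving $F_{X_\beta}(w) = L(X_\beta) = F_{X_\beta}(\emptyset)$ as claimed.
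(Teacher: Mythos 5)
Your proof is correct and follows essentially the same route as the paper: both split the shifts of an infinite extension of $wv$ into those landing in the legal tail (automatic) and those landing in $w$, where the hypothesis forces every suffix of $w$ to be \emph{strictly} lexicographically less than the corresponding prefix of $d_\beta^*(1)$, so that no constraint propagates past $w$. The only cosmetic difference is that the paper extends by $v0^\infty$ while you use an arbitrary legal extension $v\alpha$; both work equally well.
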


\begin{proof}
If $v \in L(X_\beta)$, then $v0^\infty \in X_\beta$ trivially. Then we claim $wv0^\infty \in x_\beta$ as well. Given $\sigma^i(wv0^\infty)$, if $i \geq |w|$, $\sigma^i(wv0^\infty) \preceq d_\beta^*(1)$ as $v0^\infty$ is legal. If $i < |w|$, then the first $|w| - i$ letters of $\sigma^i(wv0^\infty)$ are not a prefix of $d_\beta^*(1)$, but since they are contained in the legal word $w$, must be lexicographically less than the first $|w|-i$ letters of $d_\beta^*(1)$, and thus, $\sigma^i(wv0^\infty) \preceq d_\beta^*(1)$. Thus, $wv0^\infty \in X_\beta$ and $wv \in L(X_\beta)$.
\end{proof}

\begin{theorem}\label{FSConj}
For any $\beta$-shift, $|F_{X_\beta}(n)| \leq n$ for any $n$ if and only if $X_\beta$ is sofic.
\end{theorem}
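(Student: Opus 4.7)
The plan is to analyze follower sets in $X_\beta$ through the structure of $d := d_\beta^*(1)$, extending Lemma \ref{safewords}. For any $w \in L_n(X_\beta)$, let $k(w)$ denote the length of the longest suffix of $w$ that is a prefix of $d$. The first step is to show that $F_{X_\beta}(w)$ depends only on $k(w)$: among the shifts of a candidate extension $wu$ that begin inside $w$, those that impose a nontrivial constraint on $u$ are precisely the ones whose corresponding suffix of $w$ is a prefix of $d$, and by maximality of $k(w)$ every such suffix is also a suffix of $(d)_{k(w)}$. Thus the constraints on $u$, and hence $F_{X_\beta}(w)$, are determined entirely by the internal overlap structure of $(d)_{k(w)}$; denote the resulting follower set by $F^k$. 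Each value $k \in \{0, \ldots, n\}$ is realized by some length-$n$ word (for instance $w = 0^{n-k}(d)_k$, which lies in $L(X_\beta)$ and has $k(w) = k$ since $d_1 \geq 1$ forbids extending the match), so $|F_{X_\beta}(n)|$ is precisely the number of distinct elements of $\{F^0, F^1, \ldots, F^n\}$.

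For the forward direction, if $X_\beta$ is sofic then, by the classical equivalence, $X_\beta$ has only finitely many distinct follower sets. Hence $|F_{X_\beta}(n)|$ is bounded above by some constant $M$, and in particular $|F_{X_\beta}(n)| \leq n$ for all $n \geq M$.

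For the reverse direction, $d$ is not eventually periodic, and I would show that all the $F^k$ for $k \in \{0, \ldots, n\}$ are pairwise distinct, yielding $|F_{X_\beta}(n)| = n + 1 > n$. This distinctness is the main obstacle. The follower set takes the form $F^k = \{u \in L(X_\beta) : u \preceq m_k\}$, where $m_k := \min_{j \in S_k \setminus \{0\}} \sigma^j(d)$ is the lex-minimum over the overlap set $S_k := \{j \leq k : (d)_j \text{ is a suffix of } (d)_k\}$. One can recover $m_k$ directly from $F^k$ by taking, for each $\ell$, the lex-largest length-$\ell$ element of $F^k$, which is exactly the prefix $(m_k)_\ell$; hence $F^k$ determines $m_k$. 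An equality $F^k = F^{k'}$ for $k < k'$ thus forces $m_k = m_{k'}$, and since the shifts $\sigma^j(d)$ are all distinct when $d$ is not eventually periodic, the minimizing indices $j_0(k) = j_0(k')$ must agree. Ruling out this coincidence through careful analysis of how the overlap sets $S_k$ evolve with $k$ in a non-eventually-periodic $d$ is the technical crux of the argument.
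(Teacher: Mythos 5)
Your reduction of the problem is correct as far as it goes: the follower set of $w$ depends only on $k(w)$, every class $k\in\{0,\dots,n\}$ is realized (your witness $0^{n-k}(d)_k$ works precisely because $d$ begins with $\lfloor\beta\rfloor\geq 1$), the forward direction is immediate from finiteness of follower sets for sofic shifts, and your observation that $F^k$ determines $m_k$ (via the lexicographically largest word of each length in $F^k$) is valid. But you have not proved the theorem: the step you label ``the technical crux'' --- showing that $F^k=F^{k'}$ with $k<k'$ cannot happen when $d$ is not eventually periodic --- is exactly where all the content lies, and you leave it open. Without it you have no lower bound on $|F_{X_\beta}(n)|$ in the non-sofic case, hence no reverse implication.

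The missing step closes more easily than your framing suggests, and you do not need any ``careful analysis of how the overlap sets $S_k$ evolve.'' The point is that the minimum $m_k=\min_{j\in S_k}\sigma^j(d)$ is always attained at $j=k$ itself: if $j<k$ and $(d)_j$ is a suffix of $(d)_k$, then $(d)_j$ is simultaneously a prefix and a suffix of $(d)_k$, so $\sigma^{k-j}(d)=(d)_j\,\sigma^k(d)$ while $d=(d)_j\,\sigma^j(d)$; if $\sigma^k(d)\succ\sigma^j(d)$ this would give $\sigma^{k-j}(d)\succ d$, contradicting the shift-maximality of $d=d_\beta^*(1)$. Hence $m_k=\sigma^k(d)$ for every $k$, so $F^k=F^{k'}$ forces $\sigma^k(d)=\sigma^{k'}(d)$, which for $k\neq k'$ makes $d$ eventually periodic and $X_\beta$ sofic. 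This is precisely Lemma \ref{prefixes} and Corollary \ref{betacorrollary} of the paper, which are used there to give a second proof of Corollary \ref{FSSNonsoficBeta}; so your route, once completed, coincides with the paper's \emph{alternative} argument. The paper's own proof of Theorem \ref{FSConj} is entirely different and much shorter: it notes via Lemma \ref{safewords} that $F(0)=F(\emptyset)$ and then invokes the result of \cite{FOP} that any shift containing a word with the same follower set as the empty word satisfies the Ormes--Pavlov dichotomy. Your approach buys a self-contained proof (and the exact count $|F_{X_\beta}(n)|=n+1$ in the non-sofic case) at the cost of the overlap lemma above, which you must actually supply.
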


\begin{proof}
Since $\beta > 1$, we have $\lfloor \beta \rfloor \geq 1$. Clearly $d_\beta(1-\epsilon)$ for very small $\epsilon > 0$ must begin with $\lfloor \beta \rfloor$, and so $d_\beta^*(1)$ must begin with $\lfloor \beta \rfloor$, which is lexicographically larger than $0$, and so by Lemma \ref{safewords}, $F(0) = F(\emptyset)$ for any $\beta$-shift. It was shown in \cite{FOP} that having a word $w \in L(X)$ with $F(w)=F(\emptyset)$ implies the conjecture of Ormes and Pavlov: $|F_X(n)| \leq n$ for some $n$ if and only if $X$ is sofic. Thus, this conjecture holds for every $\beta$-shift.
\end{proof}

\begin{theorem}\label{UBFSSbeta}
For any $\beta$-shift, and for any $n \in \mathbb{N}$, we have $|F_{X_\beta}(n)| \leq n + 1$.
\end{theorem}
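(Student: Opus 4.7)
The plan is to classify each $w \in L_n(X_\beta)$ by a single invariant $k(w) \in \{0, 1, \dots, n\}$, defined as the length of the longest nonempty suffix of $w$ that is a prefix of $d_\beta^*(1)$, with $k(w) = 0$ if no nonempty suffix of $w$ is such a prefix. I would then show that $F_{X_\beta}(w)$ depends only on $k(w)$; since $k(w)$ takes at most $n+1$ values, this immediately gives $|F_{X_\beta}(n)| \leq n+1$.

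The case $k(w) = 0$ is exactly Lemma \ref{safewords}. For $k(w) \geq 1$, I would first note that $wv \in L(X_\beta)$ iff $wv0^\infty \in X_\beta$ (the argument in the proof of Lemma \ref{safewords} extends to this equivalence), and thereby reduce membership of $v$ in $F_{X_\beta}(w)$ to checking $\sigma^i(wv0^\infty) \preceq d_\beta^*(1)$ at every position $i$. Shifts starting within or after $v$ are controlled by the condition $v \in L(X_\beta)$. Shifts starting at a position $i$ inside $w$ for which the suffix $w_i \cdots w_n$ is \emph{not} a prefix of $d_\beta^*(1)$ are strictly lex-less than $d_\beta^*(1)$ regardless of $v$, since $w \in L(X_\beta)$ already forces $w_i \cdots w_n \preceq$ the matching prefix of $d_\beta^*(1)$. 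The remaining positions, indexed by the set $K(w)$ of lengths $\ell \geq 1$ for which $w$'s length-$\ell$ suffix equals $d_0 \cdots d_{\ell-1}$, impose constraints of the form $v0^\infty \preceq \sigma^\ell(d_\beta^*(1))$.

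The key step is to show that, among these constraints, the one at $\ell = k(w)$ is the strictest, i.e.\ $\sigma^{k(w)}(d_\beta^*(1)) \preceq \sigma^\ell(d_\beta^*(1))$ for every $\ell \in K(w)$. Writing $k = k(w)$, for any $\ell \leq k$ in $K(w)$, both the length-$\ell$ and length-$k$ suffixes of $w$ are prefixes of $d_\beta^*(1)$, which forces $d_{k-\ell} \cdots d_{k-1} = d_0 \cdots d_{\ell-1}$; hence the defining lex inequality $\sigma^{k-\ell}(d_\beta^*(1)) \preceq d_\beta^*(1)$ must be decided at coordinate $\ell$ or later, yielding $\sigma^k(d_\beta^*(1)) \preceq \sigma^\ell(d_\beta^*(1))$. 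Therefore $F_{X_\beta}(w) = \{v \in L(X_\beta) : v0^\infty \preceq \sigma^{k(w)}(d_\beta^*(1))\}$, which depends only on $k(w)$. The main obstacle is this self-majorization step; the bookkeeping is routine but must apply the defining inequality for $d_\beta^*(1)$ at precisely the right coordinate.
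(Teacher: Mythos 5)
Your proof is correct and uses the same partition as the paper: words of length $n$ are grouped into the $n+1$ classes $S_0,\dots,S_n$ according to the longest prefix of $d_\beta^*(1)$ occurring as a suffix. Where you diverge is in how you show the follower set is constant on each class. The paper argues by direct comparison: given $v,w\in S_k$, it transfers any legal extension $sz$ of $w$ to $v$, using the observation that shifts landing in the first $n-k$ coordinates of $v$ begin with a word that is already strictly lexicographically below the corresponding prefix of $d_\beta^*(1)$ (hence impose no constraint on what follows), while shifts landing in the shared $k$-letter suffix coincide with the corresponding shifts of $wsz$. You instead compute $F_{X_\beta}(w)$ explicitly as $\{v\in L(X_\beta)\; : \; v0^\infty\preceq \sigma^{k(w)}(d_\beta^*(1))\}$, which requires the self-majorization step $\sigma^{k}(d_\beta^*(1))\preceq\sigma^{\ell}(d_\beta^*(1))$ for every shorter suffix-length $\ell\in K(w)$. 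That step, together with your proof of it (the nesting of prefixes forces $d_{k-\ell}\cdots d_{k-1}=d_0\cdots d_{\ell-1}$, so a failure would give $\sigma^{k-\ell}(d_\beta^*(1))\succ d_\beta^*(1)$, contradicting the defining property of $d_\beta^*(1)$), is precisely Lemma \ref{prefixes} of the paper, which is proved separately after this theorem and used there to obtain the sharper Corollary \ref{betacorrollary}. So your route costs a little more up front but buys an exact description of each follower set, from which Corollary \ref{betacorrollary} and the sofic/non-sofic follower counts follow immediately; the paper's direct comparison is lighter if one only wants the bound $n+1$.
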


\begin{proof}
Fix $n \in \mathbb{N}$ and partition the words $w \in L_n(X_\beta)$ into $n+1$ classes $\{S_0, S_1, ... S_n\}$ based on the maximal prefix of $d_\beta^* (1)$ appearing as a suffix of the word $w$. So the class $S_0$ will contain words which do not contain any prefix of $d_\beta^* (1)$ as a suffix, while the words in the class $S_1$ are words ending with the first letter of $d_\beta^* (1)$, but containing no larger prefix of $d_\beta^* (1)$ as a suffix, and so on, up to $S_n$, which consists of the single word formed by the first $n$ letters of $d_\beta^* (1)$. We claim that for two words $w$ and $v \in L_n(X_\beta)$ such that $w$ and $v$ are in the same class, $F(w) = F(v)$. This will imply the result.

By Lemma \ref{safewords}, any word $w \in S_0$ will have $F(w) = F(\emptyset)$, and so the claim is certainly true for $S_0$. $S_n$ contains only one word, so the claim holds trivially for $S_n$ as well. So let $0 < k < n$ and consider two words $v, w \in S_k$. Then the last $k$ letters of $v$ and $w$ are identical, and equal to the first $k$ letters of $d_\beta^* (1)$, and so $w$ and $v$ may only differ on their first $n -k$ letters. Let $s$ be some word in $F(w)$, so there exists some right-infinite sequence $z$ such that $wsz$ is a point of $X$. Then every shift of $wsz$ is lexicographically less than or equal to $d_\beta^* (1)$. Then, certainly, every shift of $vsz$ beyond the first $n-k$ shifts is lexicographically less than or equal to $d_\beta^* (1)$, because after the first $n-k$ letters, $vsz$ and $wsz$ are equal. Furthermore, for $0 \leq i < n-k$, $\sigma^i(v)$ is not a prefix of $d_\beta^* (1)$ by definition of the class $S_k$, and so $\sigma^i(v)$ is already strictly lexicographically less than the first $n - i$ letters of $d_\beta^* (1)$ because $v$ is a legal word. Then, any sequence $sz$ may follow $\sigma^i(v)$ and $\sigma^i(vsz)$ will be less lexicographically than $d_\beta^* (1)$. So in fact, the first $n-k$ letters place no restrictions whatsoever on the words which may follow $v$, and indeed, $s \in F(v)$. So $F(w) \subseteq F(v)$ and similarly, $F(v) \subseteq F(w)$, so $F(v) = F(w)$. Hence, for any $\beta$-shift $X_\beta$, $|F_{X_\beta}(n)| \leq n+1$ for all $n \in \mathbb{N}$.
\end{proof}

\begin{corollary}\label{FSSNonsoficBeta}
For any nonsofic $\beta$-shift $X_\beta$, $\{|F_{X_\beta}(n)|\}_{n \in \mathbb{N}} = \{n+1\}_{n \in \mathbb{N}}$.
\end{corollary}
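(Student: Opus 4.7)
The plan is to complement the upper bound from Theorem \ref{UBFSSbeta} by producing $n+1$ distinct follower sets among length-$n$ words when $X_\beta$ is nonsofic. I will revisit the partition $S_0, \ldots, S_n$ constructed in the proof of Theorem \ref{UBFSSbeta} and argue three points in turn: each $S_k$ is nonempty; the common follower set of any $w \in S_k$ coincides with $F((d)_k)$, where $d := d_\beta^*(1)$; and $F((d)_k) \ne F((d)_l)$ for every $0 \le k < l \le n$.

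For the first two points I take the representative $w_k := 0^{n-k}(d)_k$. Since $d_1 = \lfloor\beta\rfloor \geq 1$, no suffix of $w_k$ longer than $k$ can be a prefix of $d$ (such a suffix would begin with $0$), so $w_k \in S_k$; and Lemma \ref{safewords} applied to $0^{n-k}$ and $(d)_k$ gives $w_k \in L(X_\beta)$. For any word $u$, the shifts of $w_k u$ that begin inside the leading zero block start with $0 < d_1$ and are automatically strictly lex-less than $d$, while the remaining shifts coincide exactly with the shifts of $(d)_k u$. Hence $w_k u \in L(X_\beta)$ if and only if $(d)_k u \in L(X_\beta)$, giving $F(w_k) = F((d)_k)$.

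The main step is distinctness, which I plan to prove by contradiction. Assume $F((d)_k) = F((d)_l)$ for some $k < l$. For each $m > l$, the word $d_{k+1}\cdots d_m$ lies in $F((d)_k)$ since $(d)_k\, d_{k+1}\cdots d_m = (d)_m$ is a prefix of $d$ and hence legal; by hypothesis it also lies in $F((d)_l)$, so $(d)_l\, d_{k+1}\cdots d_m \in L(X_\beta)$. Letting $m \to \infty$ and invoking closure of $X_\beta$ yields $y := (d)_l\, d_{k+1} d_{k+2}\cdots \in X_\beta$; interchanging the roles of $k$ and $l$ symmetrically yields $y' := (d)_k\, d_{l+1} d_{l+2}\cdots \in X_\beta$. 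Every point of $X_\beta$ satisfies $\preceq d$. Now $y$ agrees with $d$ through position $l$, and thereafter $y_{l+j} = d_{k+j}$, while $y'$ agrees with $d$ through position $k$, and thereafter $y'_{k+j} = d_{l+j}$. If a minimal $j^* \geq 1$ exists with $d_{k+j^*} \neq d_{l+j^*}$, then $y \preceq d$ forces $d_{k+j^*} < d_{l+j^*}$, while $y' \preceq d$ forces $d_{l+j^*} < d_{k+j^*}$, a contradiction. Hence $d_{k+j} = d_{l+j}$ for every $j \geq 1$, so $d$ is eventually periodic of period $l-k$; but $X_\beta$ is sofic if and only if $d$ is eventually periodic, contradicting the hypothesis. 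Combining with Theorem \ref{UBFSSbeta} then yields $|F_{X_\beta}(n)| = n+1$ for every $n$.

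The main obstacle is the index bookkeeping in the third step: one must arrange the two sequences $y$ and $y'$ so that their deviations from $d$ land at positions $l+j^*$ and $k+j^*$ indexed by the \emph{same} $j^*$, and verify that the lex condition on each produces strict inequalities in opposite directions at that shared index. Everything else is straightforward once the identification $F(w_k) = F((d)_k)$ reduces the problem to distinguishing follower sets of prefixes of $d$.
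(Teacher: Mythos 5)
Your proof is correct, but it takes a different route from the one the paper actually uses for this corollary. The paper's proof is a two-line combination of the upper bound of Theorem \ref{UBFSSbeta} with the lower bound $|F_{X_\beta}(n)| \geq n+1$ supplied by Theorem \ref{FSConj}, which in turn rests on an external result from \cite{FOP} (that the existence of a word with $F(w)=F(\emptyset)$ implies the Ormes--Pavlov characterization). You instead prove the lower bound from scratch: you exhibit the explicit representatives $0^{n-k}(d)_k \in S_k$, reduce to comparing $F((d)_k)$ across $k$, and show these are pairwise distinct by noting that $F((d)_k)=F((d)_l)$ would put both $(d)_l\,\sigma^k(d)$ and $(d)_k\,\sigma^l(d)$ in $X_\beta$, forcing $\sigma^k(d)=\sigma^l(d)$ and hence eventual periodicity of $d$, i.e.\ soficity. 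This is essentially the content of the paper's own alternative derivation via Lemma \ref{prefixes} and Corollary \ref{betacorrollary} (stated after the corollary as a confirmation), though your mechanism for the key comparison differs: the paper observes that $\sigma^j(d) \succ \sigma^k(d)$ would give $\sigma^{j-k}(d) \succ d$, whereas you build the two witness points $y$ and $y'$ and play their lexicographic constraints against each other at a common index $j^*$; your index bookkeeping there is correct. What your version buys is self-containment --- no appeal to \cite{FOP} --- and an explicit verification that every class $S_k$ is nonempty, which the paper's alternative argument implicitly assumes; what it costs is length relative to the paper's citation-based proof.
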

\begin{proof}
By Thm. \ref{UBFSSbeta}, $|F_{X_\beta}(n)| \leq n + 1$ for all $n \in \mathbb{N}$. If $X_\beta$ is non-sofic, by Lemma \ref{FSConj},  $|F_{X_\beta}(n)| \geq n+1$ for all $n \in \mathbb{N}$. Thus, for all $n \in \mathbb{N}$ we have $|F_{X_\beta}(n)| = n+1$, that is, the follower set sequence $\{|F_{X_\beta}(n)|\}_{n \in \mathbb{N}}$ of $X_\beta$ is equal to $\{n+1\}_{n \in \mathbb{N}}$.
\end{proof}

\begin{remark} 
The shift $X_\beta$ contains a synchronizing word if and only if there exists some word $w \in L(X_\beta)$ such that $w$ is not a subword of the sequence $d_\beta^* (1)$ (see~\cite{Blanchard}). This provides examples of non-sofic shifts which contain a synchronizing word and still have the property that $|F_X(n)| = n+1$ for all $n \in \mathbb{N}$. Non-sofic $\beta$-shifts are furthermore remarkable because, unlike Sturmian shifts and other examples in which $|F_X(n)| = n+1$ for all $n$, non-sofic $\beta$-shifts have positive topological entropy. ($\beta > 1$ and the topological entropy of a $\beta$-shift is $\log(\beta)$).
\end{remark}

Though this is sufficient to characterize the follower set sequences of non-sofic beta-shifts, we may also prove that non-sofic beta-shifts have $n+1$ follower sets of words of length $n$ for every $n$ using the following lemma, which will be useful later:

\begin{lemma}\label{prefixes} Let $X_\beta$ be a $\beta$-shift. Let $w \in L_n(X_\beta)$ such that $w$ contains multiple different prefixes of $d_\beta^* (1)$ as suffixes (all necessarily nested subwords). Say the longest prefix of $d_\beta^* (1)$ contained as a suffix of $w$ has length $j$. Then $\sigma^j(d_\beta^* (1)) \preceq \sigma^k(d_\beta^* (1))$ for every $k$ such that $w$ has a prefix of $d_\beta^* (1)$ of length $k$ as a suffix, and therefore the follower set of $w$ consists of exactly all finite prefixes of the sequences which are lexicographically less than or equal to $\sigma^j(d_\beta^* (1))$.
\end{lemma}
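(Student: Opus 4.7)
The proof splits into two parts: the lexicographic inequality, and the characterization of $F(w)$.

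For the first part, let $k < j$ be a length for which $(d_\beta^*(1))_k$ is a suffix of $w$. Since both $(d_\beta^*(1))_k$ and $(d_\beta^*(1))_j$ occur as nested suffixes of $w$, the last $k$ letters of $(d_\beta^*(1))_j$ must coincide with $(d_\beta^*(1))_k$. Translating this to positions within $d_\beta^*(1)$, the letters at positions $j - k, \ldots, j - 1$ equal those at positions $0, \ldots, k - 1$; equivalently, $\sigma^{j-k}(d_\beta^*(1))$ agrees with $d_\beta^*(1)$ on its first $k$ entries. Since $\sigma^{j-k}(d_\beta^*(1)) \preceq d_\beta^*(1)$ by the defining property of $d_\beta^*(1)$, and the first $k$ entries match, the comparison is decided from position $k$ onward, giving $\sigma^j(d_\beta^*(1)) = \sigma^k(\sigma^{j-k}(d_\beta^*(1))) \preceq \sigma^k(d_\beta^*(1))$.

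For the characterization, I would show both containments. Given $u \in F(w)$, pick any extension $wuz \in X_\beta$. The shift $\sigma^{n-j}(wuz)$ begins with $(d_\beta^*(1))_j$ followed by $uz$ and must be $\preceq d_\beta^*(1)$; matching on the first $j$ letters forces $uz \preceq \sigma^j(d_\beta^*(1))$. Moreover, $uz \in X_\beta$, since every shift of $uz$ appears as a shift of $wuz$ at some index $\geq n$. So $u$ is a prefix of a sequence $y := uz \in X_\beta$ with $y \preceq \sigma^j(d_\beta^*(1))$. Conversely, given $y = uz \in X_\beta$ with $y \preceq \sigma^j(d_\beta^*(1))$, I claim $wy \in X_\beta$, which yields $wu \in L(X_\beta)$ and hence $u \in F(w)$. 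Shifts of $wy$ at positions $\geq n$ are shifts of $y$ and so are $\preceq d_\beta^*(1)$. For $0 \leq i < n$, the prefix of $\sigma^i(wy)$ is the length-$(n - i)$ suffix of $w$. If that suffix is not a prefix of $d_\beta^*(1)$, then (using $w$'s legality in $X_\beta$) it is strictly $\prec (d_\beta^*(1))_{n-i}$, so $\sigma^i(wy) \prec d_\beta^*(1)$ regardless of $y$. Otherwise, $k := n - i$ is one of the lengths at which $(d_\beta^*(1))_k$ is a suffix of $w$, and $\sigma^i(wy) = (d_\beta^*(1))_k \cdot y$; the needed bound $\preceq d_\beta^*(1)$ reduces to $y \preceq \sigma^k(d_\beta^*(1))$, which follows from $y \preceq \sigma^j(d_\beta^*(1)) \preceq \sigma^k(d_\beta^*(1))$ by the first part.

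The main obstacle is the bookkeeping in the converse direction: one has to check every shift of $wy$ at indices $0 \leq i < n$ where the corresponding suffix of $w$ matches some shorter prefix of $d_\beta^*(1)$. It is precisely to absorb those cases that the first part of the lemma is needed, ensuring that among all constraints $y \preceq \sigma^k(d_\beta^*(1))$ the most restrictive is the one coming from $k = j$.
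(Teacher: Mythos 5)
Your proof is correct and follows essentially the same route as the paper: the key inequality $\sigma^j(d_\beta^*(1)) \preceq \sigma^k(d_\beta^*(1))$ is derived from the nesting of the suffixes together with the defining property that every shift of $d_\beta^*(1)$ is $\preceq d_\beta^*(1)$ (you argue the contrapositive direction directly, the paper by contradiction), and the description of $F(w)$ then follows by checking which shifts of $wy$ impose constraints. The only difference is that you verify both containments for $F(w)$ explicitly where the paper asserts them more tersely.
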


\begin{proof}
Suppose $k < j$ and $w$ contains a prefix of $d_\beta^* (1)$ of length $j$ and a prefix of $d_\beta^* (1)$ of length $k$ as suffixes. We will denote the prefix of length $j$ by $(d_\beta^* (1))_j$, and similarly for $k$. Note that the first $k$ and the last $k$ letters of $(d_\beta^* (1))_j$ must be equal to $(d_\beta^* (1))_k$. So $d_\beta^* (1)$ might look like:\newline \newline

\includegraphics[scale=1]{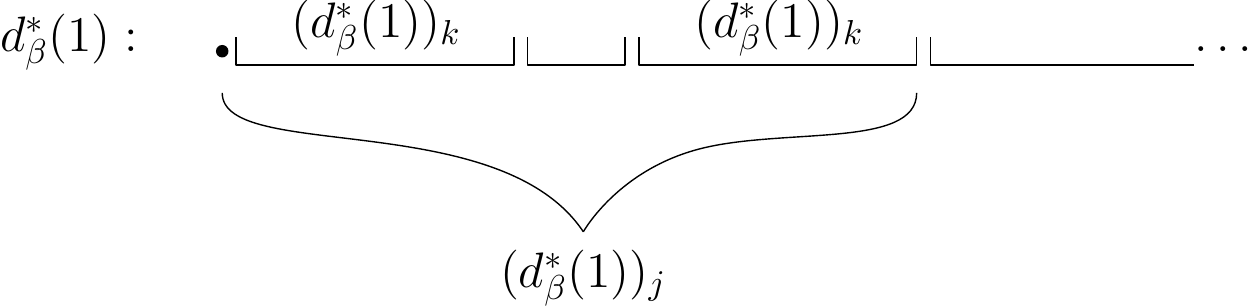} \newline

(It is also possible that no letters exist in the middle portion between the two occurences of $(d_\beta^* (1))_k$, or even that they overlap; the proof still holds in such cases). \newline
If $\sigma^j(d_\beta^* (1)) \succ \sigma^k(d_\beta^* (1))$, then it is easy to see that $\sigma^{j-k}(d_\beta^* (1)) \succ d_\beta^* (1)$, a contradiction, as every shift of $d_\beta^* (1)$ must be lexicographically less than or equal to $d_\beta^* (1)$. Therefore $\sigma^j(d_\beta^* (1)) \preceq \sigma^k(d_\beta^* (1))$, establishing the result. Since the right-infinite sequences which may legally follow $w$ consist of the sequences $z$ st. every shift of $wz$ is lexicographically less than or equal to $d_\beta^* (1)$, we have \newline
$\begin{aligned}
F(w) = \{s \> | & \> s \text{ is a prefix of some right-infinite sequence }z \\
&\text{ s.t. }z \preceq \sigma^k(d_\beta^* (1)) \> \forall \> k \text{ s.t. }w \text{ has a }k \text{-letter prefix of }d_\beta^* (1) \text{ as a suffix}\}.
\end{aligned}$ \newline
Since $\sigma^j(d_\beta^* (1))$ is the smallest such shift, the follower set of $w$ consists exactly of prefixes of those sequences lexicographically less than or equal to $\sigma^j(d_\beta^* (1))$.
\end{proof}

\begin{corollary}\label{betacorrollary} Let $X_\beta$ be a $\beta$-shift, $n \in \mathbb{N}$, and the classes $S_0, S_1, ..., S_n$ be as defined in the proof of Theorem \ref{UBFSSbeta}. Then the follower set of words in class $S_j$ is equal to the follower set of words in class $S_k$ if and only if $\sigma^j(d_\beta^* (1)) = \sigma^k(d_\beta^* (1))$.
\end{corollary}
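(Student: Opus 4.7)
The plan is to combine Lemma \ref{safewords} and Lemma \ref{prefixes} to pin down, for each $j$, a single canonical description of the common follower set $F(S_j)$, and then compare these descriptions directly as the index varies. Specifically, I would first argue that for any $w \in S_j$ with $j \geq 1$, the follower set $F(w)$ equals $G_j := \{s \in L(X_\beta) : s \text{ is a prefix of some } z \in X_\beta \text{ with } z \preceq \sigma^j(d_\beta^*(1))\}$. When $w$ carries multiple nested prefixes of $d_\beta^*(1)$ as suffixes this is immediate from Lemma \ref{prefixes}; when $w$ carries only the single prefix of length $j$, the same characterization follows by writing out the legality condition, namely that every shift of $wsz$ must be $\preceq d_\beta^*(1)$, and observing that the only binding constraint comes from the shift that starts at the beginning of the length-$j$ suffix, giving $sz \preceq \sigma^j(d_\beta^*(1))$. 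For $w \in S_0$, Lemma \ref{safewords} gives $F(w) = L(X_\beta)$, which coincides with $G_0$ since every $z \in X_\beta$ satisfies $z \preceq d_\beta^*(1) = \sigma^0(d_\beta^*(1))$.

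Having established $F(S_j) = G_j$ for every $j$, the corollary reduces to showing that $G_j = G_k$ if and only if $\sigma^j(d_\beta^*(1)) = \sigma^k(d_\beta^*(1))$. The forward implication of the equivalence (the \emph{if} direction of the corollary) is immediate, as the definition of $G_j$ depends only on the shifted sequence $\sigma^j(d_\beta^*(1))$.

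For the contrapositive of the other direction, assume $\sigma^j(d_\beta^*(1)) \neq \sigma^k(d_\beta^*(1))$ and, without loss of generality, $\sigma^j(d_\beta^*(1)) \prec \sigma^k(d_\beta^*(1))$. Let $m$ be the first index at which they disagree; the $(m+1)$-th letter of $\sigma^k(d_\beta^*(1))$ strictly exceeds the corresponding letter of $\sigma^j(d_\beta^*(1))$. Let $u$ denote the prefix of $\sigma^k(d_\beta^*(1))$ of length $m+1$. Then $u \in G_k$, since $u$ is a prefix of $\sigma^k(d_\beta^*(1))$ itself, and $\sigma^k(d_\beta^*(1)) \in X_\beta$ because every shift of $d_\beta^*(1)$ lies in $X_\beta$. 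On the other hand, any right-infinite extension of $u$ begins with $u$ and hence is lexicographically strictly greater than $\sigma^j(d_\beta^*(1))$, so $u \notin G_j$. This exhibits a witness distinguishing $G_j$ from $G_k$, completing the proof.

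The main delicate point is not the distinguishing-word construction but rather the uniform description of $F(w)$ for $w \in S_j$. Lemma \ref{prefixes} is stated under the assumption of multiple nested prefix-suffixes, so I expect the only real bookkeeping is to verify the same characterization for the single-prefix case, which is essentially the second paragraph of the proof of Theorem \ref{UBFSSbeta} repackaged, and to confirm the $S_0$ case fits the formula via Lemma \ref{safewords}.
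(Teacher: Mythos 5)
Your proof is correct and follows essentially the same route as the paper, which simply observes that Lemma \ref{prefixes} identifies $F(w)$ with the set of prefixes of sequences $\preceq \sigma^j(d_\beta^*(1))$ and declares the corollary immediate. The details you supply --- extending that identification to the single-prefix and $S_0$ cases, and exhibiting the explicit word $u$ witnessing $G_j \neq G_k$ when the shifted sequences differ --- are exactly what the paper leaves implicit.
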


\begin{proof} Lemma \ref{prefixes} shows that we may distinguish between follower sets of words by only looking at $\sigma^j(d_\beta^* (1))$, where $j$ is the length of the longest prefix of $d_\beta^* (1)$ appearing as a suffix of the word. The corollary follows immediately from that observation.
\end{proof}

It is easy to characterize the follower sets of a non-sofic $\beta$-shift from this corollary: since $d_\beta^* (1)$ is not eventually periodic, $j \neq k$ implies $\sigma^j(d_\beta^* (1)) \neq \sigma^k(d_\beta^* (1))$, and therefore the follower sets in each class $S_0, S_1, ... S_n$ must all be distinct. Thus, for each $n$, $|F_{X_\beta}(n)| = n+1$ (confirming a result we already established in Corollary \ref{FSSNonsoficBeta}). On the other hand, the corollary will also aid in characterizing the follower set sequences of sofic $\beta$-shifts: 

\begin{theorem}\label{sofic beta}
For any sofic $\beta$-shift, let $p = \min\{j \in \mathbb{N} ~ | ~ \exists ~ k<j, \ \sigma^k(d_\beta^* (1)) = \sigma^j(d_\beta^* (1)) \}$. Then for any $n \in \mathbb{N}$, we have: $|F_{X_\beta}(n)| =  \begin{cases} 
      n+1 & n < p \\
      p & n \geq p 
   \end{cases}
$
\end{theorem}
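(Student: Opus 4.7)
The plan is to leverage the partition of $L_n(X_\beta)$ into classes $S_0, S_1, \ldots, S_n$ from the proof of Theorem \ref{UBFSSbeta} together with Corollary \ref{betacorrollary}. The latter tells us that two classes $S_j$ and $S_k$ yield the same follower set if and only if $\sigma^j(d_\beta^* (1)) = \sigma^k(d_\beta^* (1))$. So once I verify every class is nonempty, $|F_{X_\beta}(n)|$ equals the number of distinct elements of the list $\sigma^0(d_\beta^* (1)), \sigma^1(d_\beta^* (1)), \ldots, \sigma^n(d_\beta^* (1))$.

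First I would check nonemptiness of each $S_j$ for $0 \leq j \leq n$. For $j = n$, the class $S_n$ contains $(d_\beta^*(1))_n$. For $0 \leq j < n$, consider $w = 0^{n-j}(d_\beta^*(1))_j$. Since $d_\beta^*(1)$ begins with $d_0 = \lfloor \beta \rfloor \geq 1$, the infinite sequence $0^{n-j}d_\beta^*(1)$ lies in $X_\beta$: every shift beginning inside the leading zero block starts with $0$ and is thus lexicographically strictly less than $d_\beta^*(1)$, while every later shift is a shift of $d_\beta^*(1)$ itself. Hence $w \in L_n(X_\beta)$. Moreover, every suffix of $w$ of length greater than $j$ begins with $0 \neq d_0$, so no prefix of $d_\beta^*(1)$ longer than $(d_\beta^*(1))_j$ can occur as a suffix of $w$, and $w \in S_j$.

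Next I would count the distinct shifts. By the minimality in the definition of $p$, the shifts $\sigma^0(d_\beta^*(1)), \ldots, \sigma^{p-1}(d_\beta^*(1))$ are pairwise distinct, since any collision $\sigma^a(d_\beta^*(1)) = \sigma^b(d_\beta^*(1))$ with $a < b \leq p-1$ would witness the defining property of $p$ at index $b < p$, a contradiction. When $n < p$, this yields $n+1$ distinct shifts, and hence $|F_{X_\beta}(n)| = n+1$ via the upper bound in Theorem \ref{UBFSSbeta}. When $n \geq p$, pick $k < p$ with $\sigma^k(d_\beta^*(1)) = \sigma^p(d_\beta^*(1))$; applying $\sigma$ gives $\sigma^{k+i}(d_\beta^*(1)) = \sigma^{p+i}(d_\beta^*(1))$ for every $i \geq 0$, so the sequence of shifts becomes periodic from index $k$ onward with period $p - k$. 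A routine induction then shows that every $\sigma^m(d_\beta^*(1))$ with $m \geq p$ coincides with some $\sigma^j(d_\beta^*(1))$ with $j < p$. The total number of distinct values among $\sigma^0(d_\beta^*(1)), \ldots, \sigma^n(d_\beta^*(1))$ is therefore exactly $p$, and so $|F_{X_\beta}(n)| = p$.

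The main obstacle I anticipate is the nonemptiness verification for $0 \leq j < n$: one must simultaneously confirm that $0^{n-j}(d_\beta^*(1))_j$ is legal in $X_\beta$ and that no longer prefix of $d_\beta^*(1)$ sneaks in as a suffix of it. Both concerns are resolved by the single observation that $d_0 = \lfloor \beta \rfloor \geq 1$, so the leading-zero block both dominates every shift comparison against $d_\beta^*(1)$ and blocks any over-long prefix-as-suffix. With the classes populated, the remainder is bookkeeping about the period of the shift orbit of $d_\beta^*(1)$.
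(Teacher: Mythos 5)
Your proof is correct and follows essentially the same route as the paper's: partition $L_n(X_\beta)$ into the classes $S_0,\dots,S_n$, invoke Corollary~\ref{betacorrollary}, and count the distinct shifts $\sigma^0(d_\beta^*(1)),\dots,\sigma^n(d_\beta^*(1))$ using the minimality of $p$ and the eventual periodicity of the shift orbit. The only difference is that you explicitly verify each class $S_j$ is nonempty (via the word $0^{n-j}(d_\beta^*(1))_j$), a point the paper leaves implicit.
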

\begin{proof}
If $X_\beta$ is sofic sequence $d_\beta^* (1)$ is eventually periodic (see ~\cite{Blanchard}). Let $p = \min\{j \in \mathbb{N} ~ | ~ \exists ~ k<j, \ \sigma^k(d_\beta^* (1)) = \sigma^j(d_\beta^* (1)) \}$. Then for every length $n < p$, the shifts $\sigma^i(d_\beta^* (1))$, $0 \leq i \leq n$, are all distinct, and so $|F_{X_\beta}(n)| = n+1$ by Cor. \ref{betacorrollary}. However, at length $p$, the word in $S_p$ will have the same follower set as words in length $k$ for some $k < j$ by Cor. \ref{betacorrollary}, and so there will only be $p$ (rather than $p+1$) follower sets of words of length $p$. But if $\sigma^p(d_\beta^* (1)) = \sigma^k(d_\beta^* (1))$, then $\sigma^{p+\ell}(d_\beta^* (1)) = \sigma^{k+\ell}(d_\beta^* (1))$ for all $\ell \in \mathbb{N}$, and so after length $p$, no lengths will contribute any new follower sets. Indeed, $|F_{X_\beta}(n)| = p$ for all $n \geq p$.
\end{proof}

We next explore the predecessor set sequences of $\beta$-shifts. 

\begin{theorem}\label{PSSBeta}
For any $\beta$-shift $X_\beta$, the predecessor set sequence $\{|P_{X_\beta}(n)|\}_{n \in \mathbb{N}}$ of $X_\beta$ is equal to the complexity sequence $\{\Phi_n(d_\beta^*(1))\}_{n \in \mathbb{N}}$ of $d_\beta^*(1)$.
\end{theorem}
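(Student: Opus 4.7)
The plan is to identify the equivalence classes of length-$n$ words in $L(X_\beta)$ under ``same predecessor set'' with the distinct length-$n$ subwords of $d_\beta^*(1)$.

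First, I would characterize when $sw \in L(X_\beta)$ for $w \in L_n(X_\beta)$ and $s \in L(X_\beta)$. Since $u \in L(X_\beta)$ iff $u \cdot 0^\infty \in X_\beta$ iff every shift of $u \cdot 0^\infty$ is lexicographically at most $d_\beta^*(1)$, I need only check the shifts of $sw \cdot 0^\infty$ that start inside $s$; the others are automatic from $w \in L(X_\beta)$. A shift starting at position $i < |s|$ gives a nontrivial constraint only when the suffix $s_{i+1}\cdots s_{|s|}$ actually equals a prefix of $d_\beta^*(1)$ (otherwise the fact that $s \in L(X_\beta)$ already forces strict lexicographic inequality within the $s$-portion). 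When that suffix has length $\ell$ and equals $(d_\beta^*(1))_\ell$, the constraint becomes $w \preceq (\sigma^\ell(d_\beta^*(1)))_n$.

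Next, by Lemma \ref{prefixes} applied to $s$, among all $\ell$ for which the length-$\ell$ suffix of $s$ is a prefix of $d_\beta^*(1)$, the shift $\sigma^\ell(d_\beta^*(1))$ is lexicographically smallest at $\ell = \ell_{\max}(s)$, the largest such $\ell$. Taking $n$-prefixes preserves order, so the binding constraint reduces to $w \preceq (\sigma^{\ell_{\max}(s)}(d_\beta^*(1)))_n$. Consequently $P_{X_\beta}(w)$ depends on $w$ only through the set $K(w) = \{u \in U_n : w \preceq u\}$, where $U_n = \{(\sigma^\ell(d_\beta^*(1)))_n : \ell \geq 0\}$ is exactly the set of length-$n$ subwords of $d_\beta^*(1)$, of cardinality $\Phi_n(d_\beta^*(1))$.

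Listing $U_n$ in decreasing lexicographic order as $u_1 \succ u_2 \succ \cdots \succ u_M$ with $M = \Phi_n(d_\beta^*(1))$, the set $K(w)$ is forced to be an initial segment $\{u_1, \ldots, u_k\}$ with $k \geq 1$ (since $w \preceq u_1 = (d_\beta^*(1))_n$), so there are at most $M$ possible values of $K(w)$, and each $k \in \{1, \ldots, M\}$ is realized by $w = u_k \in L_n(X_\beta)$. To show these $M$ patterns yield $M$ distinct predecessor sets, for $k_1 < k_2$ (necessarily $k_2 \geq 2$) pick a position $p \geq 1$ at which $u_{k_2}$ occurs in $d_\beta^*(1)$ (possible since $u_{k_2} \neq u_1$ cannot occur at position $0$) and set $s = (d_\beta^*(1))_p$. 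Then $s \in L(X_\beta)$ and $\ell_{\max}(s) = p$, so $(\sigma^{\ell_{\max}(s)}(d_\beta^*(1)))_n = u_{k_2}$; this places $s$ in $P(u_{k_2})$ but not in $P(u_{k_1})$ because $u_{k_1} \succ u_{k_2}$. The main obstacle I anticipate is the boundary analysis in the first step: isolating precisely which shifts of $sw$ impose new constraints beyond $s, w \in L(X_\beta)$ and invoking Lemma \ref{prefixes} to collapse them to a single condition depending only on $\ell_{\max}(s)$. Once this reduction is in place, the bijection between equivalence classes of length-$n$ words and length-$n$ subwords of $d_\beta^*(1)$ is immediate.
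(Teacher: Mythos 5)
Your proposal is correct and follows essentially the same route as the paper: both classify predecessors $s$ by the maximal prefix of $d_\beta^*(1)$ occurring as a suffix of $s$, use Lemma \ref{prefixes} to collapse the legality of $sw$ to the single condition $w \preceq (\sigma^{\ell_{\max}(s)}(d_\beta^*(1)))_n$, and then count the nested initial segments of the $n$-letter subwords of $d_\beta^*(1)$. Your write-up is somewhat more explicit than the paper's on the final step (realizing each initial segment by $w = u_k$ and exhibiting a separating predecessor $s = (d_\beta^*(1))_p$), but the underlying argument is the same.
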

\begin{proof}
First, partition all finite words into classes $S_0, S_1, ...$ as before, indexed by the maximal prefix of $d_\beta^* (1)$ appearing as a suffix of the word. So $S_0$ is the set of words containing no prefix of $d_\beta^* (1)$ as a suffix, $S_1$ is the set of words terminating with the first letter of $d_\beta^* (1)$ but containing no larger prefix as a suffix, and so on. Note that, as we are considering all finite words, not just finite words of a fixed length, there will be infinitely many classes, each containing words of many different lengths.

Let $k, n \in \mathbb{N}$ and $w \in L_n(X_\beta)$. Then all of the words in $S_k$ are in the predecessor set of $w$ if and only if $w \preceq (\sigma^k(d_\beta^* (1)))_n$ (we are implicitly using Lemma \ref{prefixes} to rule out the possibility that a prefix of $d_\beta^* (1)$ of length less than $k$ could introduce some illegal word). For the rest of this proof, when a word $w$ satisfies the condition $w \preceq (\sigma^k(d_\beta^* (1)))_n$, we will simply say that w \textit{satisfies condition $k$}. Since $k$ may range from $0$ to infinity, while $n$ is fixed, every $n$-letter word in $d_\beta^* (1)$ will appear as the upper bound in condition $k$ for some $k$. Moreover, these predecessor sets are nested--if $S_k$ is part of the predecessor set of a word $w$, then it is also part of the predecessor set of every lexicographically smaller word of the same length. So, given a word $w$, the predecessor set of $w$ depends only on how many of $\Phi_{d_\beta^*(1)}(n)$ different conditions on $w$ are met. But since these conditions are nested, there are only $\Phi_{d_\beta^* (1)}(n)+1$ (nested) subsets of these conditions which may be simultaneously met. Moreover, $w$ must satisfy at least \textit{one} of them, because for any legal word $w \in L_n(X_\beta)$, $w \preceq (d_\beta^* (1))_n$, so $w$ satisfies condition 0. If this is the only one of the conditions satisfied, then the predecessor set of $w$ only consists of words in $S_0$ and $S_i$ for any $i$ such that $(\sigma^i(d_\beta^*(1)))_n = (d_\beta^*(1))_n$ (that is, that condition $i$ and condition 0 are the same condition). If $w$ satisfies this condition and only one other, the predecessor set of $w$ contains $S_0$ and any such $S_i$ as before, along with any $S_k$ such that $(\sigma^k(d_\beta^* (1)))_n$ is equal to the lexicographically greatest $n$-letter word in $d_\beta^* (1)$ besides $(d_\beta^* (1))_n$, and so on. In other words, words in $S_k$ are part of the predecessor set of $w$ when $w$ satisfies condition $k$, but there are only $\Phi_{d_\beta^* (1)}(n)$-many subsets of $\mathbb{N}$ which are realizable as the set of all $k$ for which $w$ satisfies condition $k$. Thus, for all $n$, there are $\Phi_{d_\beta^* (1)}(n)$ predecessor sets of length $n$ in $X_\beta$. So the predecessor set sequence of any $\beta$-shift is $\{\Phi_{d_\beta^* (1)}(n)\}_{n \in \mathbb{N}}$.
\end{proof}

This is enough to see that the predecessor and follower set sequences of $\beta$-shifts may exhibit vastly different limiting behavior:

\begin{example}
Let $d$ be any right-infinite sequence on $\{0,1\}$ which contains every word in $L(X_{[2]})$ as a subword, that is, that $\mathcal{O}(d) = \{\sigma^i(d) \> | \> i \in \mathbb{N}\}$ is dense in $\{0,1\}^\mathbb{N}$. Let $\tilde{d} = .2d$, so the first digit of $\tilde{d}$ is 2 and $\sigma(\tilde{d}) = d$. Then $\tilde{d}$ satisfies the hypothesis of Lemma \ref{twoprops} and so there exists some $2 < \beta \leq 3$ such that $\tilde{d} = d_\beta^*(1)$. Since $\tilde{d}$ is certainly not eventually periodic, $X_\beta$ is non-sofic, and so the follower set sequence of $X_\beta$ is $\{|F_{X_\beta}(n)|\} = \{n+1\}$ by Corollary \ref{FSSNonsoficBeta}. On the other hand, by Theorem \ref{PSSBeta}, the predecessor set sequence of $X_\beta$ is $\{|P_{X_\beta}(n)|\} = \{\Phi_{\tilde{d}}(n)\} = \{2^n + 1\}$. This shows that the predecessor set sequence may grow exponentially in $n$ even when the follower set sequence grows only linearly in $n$.
\end{example}

We see that the complexity sequence of any sequence which serves as $d_\beta^* (1)$ for some $\beta$ may be realized as the predecessor set sequence of a shift space. This leads us to a new question: May any complexity sequence appear as the follower or predecessor set sequence of some shift? The following propositions answer the question in the affirmative for a wide class of sequences:

\begin{proposition}\label{achievable}
Let $\{\Phi_d(n)\}$ be the complexity sequence of a right-infinite sequence $d$ such that for all $i \in \mathbb{N}$, $\sigma^i(d) \preceq d$. Then the sequence $\{\Phi_d(n)\}$ is the predecessor set sequence of $X_\beta$ for some $\beta > 1$.
\end{proposition}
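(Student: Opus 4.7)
The strategy is to realize the sequence $d$ itself as $d_\beta^*(1)$ for some $\beta > 1$ via Lemma \ref{twoprops}, and then to apply Theorem \ref{PSSBeta}, which identifies the predecessor set sequence of $X_\beta$ with the complexity sequence of $d_\beta^*(1)$.

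First I would check the alphabet setup required by Lemma \ref{twoprops}. The hypothesis $\sigma^i(d) \preceq d$ forces the maximum letter of $d$ to occur at position $0$: if some $d_j$ were strictly greater than $d_0$, then $\sigma^j(d)$ would begin with a letter greater than $d_0$, contradicting $\sigma^j(d) \preceq d$. Setting $k := d_0$, the sequence $d$ lies in $\{0,1,\ldots,k\}^{\mathbb{N}}$ with $k$ occurring (at position $0$), which exactly matches the hypotheses of Lemma \ref{twoprops} apart from the non-termination clause.

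In the principal case where $d$ does not terminate in an infinite string of zeros, Lemma \ref{twoprops} directly supplies a $\beta \in (k, k+1]$ with $d = d_\beta^*(1)$. Theorem \ref{PSSBeta} applied to this $\beta$ then gives that the predecessor set sequence of $X_\beta$ equals $\{\Phi_{d_\beta^*(1)}(n)\}_{n \in \mathbb{N}} = \{\Phi_d(n)\}_{n \in \mathbb{N}}$, which is exactly the desired conclusion.

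The main obstacle is the degenerate case where $d$ does terminate in $0^\infty$, where Lemma \ref{twoprops} cannot be applied directly. Writing $d = u\, 0^\infty$ with $u = d_0 \cdots d_{m-1}$ and $d_{m-1} \neq 0$, my plan is to pass to the purely periodic replacement $d' := (u\,0)^\infty$ and apply the previous paragraph to $d'$. One would verify (i) that $d'$ still satisfies the lex condition $\sigma^i(d') \preceq d'$, inherited from the lex condition on $d$ together with the fact that an initial block of any shift of $d'$ agrees with the corresponding shift of $d$, and (ii) that the complexity sequences match, i.e.\ $\Phi_{d'}(n) = \Phi_d(n)$ for every $n$. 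The delicate step is this complexity-matching verification across all $n$, together with the boundary edge case $d = 0^\infty$, where one instead takes $\beta$ to be an integer so that $X_\beta$ is a full shift and the predecessor set sequence is the constant sequence $\{1,1,1,\ldots\}$.
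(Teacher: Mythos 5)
Your treatment of the main case is exactly the paper's argument: apply Lemma \ref{twoprops} to realize $d$ itself as $d_\beta^*(1)$, then invoke Theorem \ref{PSSBeta}. The gap is in the degenerate case $d = u\,0^\infty$. The periodization $d' := (u0)^\infty$ does \emph{not} in general have the same complexity sequence as $d$, so step (ii) of your plan --- which you correctly flag as the delicate one --- actually fails. Concretely, take $d = 1010^\infty$, which satisfies $\sigma^i(d) \preceq d$ for all $i$ and ends in $0^\infty$ with $u = 101$. Then $d' = (1010)^\infty = (10)^\infty$, and $\Phi_d(n) = 4$ for all $n \geq 3$ (the subwords of length $3$ are $101$, $010$, $100$, $000$), while $\Phi_{d'}(n) = 2$ for all $n$. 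The failure has two independent sources: first, $u0$ may be a power of a shorter word, collapsing the period; second, the word $0^n$ lies in $L(d)$ for every $n$ but disappears from $L(d')$ as soon as $n$ exceeds the longest run of zeros in $(u0)^\infty$ --- for instance $d = 11010^\infty$ has $\Phi_d(2) = 4$ (the words $11, 10, 01, 00$) but $\Phi_{(11010)^\infty}(2) = 3$, since $00$ never occurs in that periodic sequence. So the degenerate case is not covered by your construction.

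The paper's fix is simpler and sidesteps all of this: replace $d$ by the sequence $\hat d$ obtained by adding $1$ to every digit. This is a letter-to-letter bijection of alphabets, so it preserves the complexity sequence and the condition $\sigma^i(\hat d) \preceq \hat d$ verbatim, and $\hat d$ contains no zeros at all, so it cannot end in $0^\infty$ and Lemma \ref{twoprops} applies to it directly. This recoding also absorbs your edge case $d = 0^\infty$, which becomes $1^\infty = d_2^*(1)$, giving the full shift with constant predecessor set sequence $\{1,1,1,\ldots\} = \{\Phi_{0^\infty}(n)\}$. Substituting that recoding for your periodization repairs the proof.
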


\begin{proof} 
If $d$ is such that for all $i \in \mathbb{N}$, $\sigma^i(d) \preceq d$, and $d$ does not end in an infinite string of 0's, then by Lemma \ref{twoprops} there exists $\beta$ such that $d = d_\beta^*(1)$, and by Theorem \ref{PSSBeta}, $\{|P_{X_\beta}(n)|\} = \{\Phi_d(n)\}$. If $d$ is such that for all $i \in \mathbb{N}$, $\sigma^i(d) \preceq d$, but $d$ does end in an infinite string of 0's, define a new sequence $\hat{d}$ such that each digit of $\hat{d}$ is one greater than the corresponding digit of $d$. Then $\hat{d}$ has the same complexity sequence as $d$, and also satisfies the property that for all $i \in \mathbb{N}$, $\sigma^i(\hat{d}) \preceq \hat{d}$, and so by Lemma \ref{twoprops} there exists $\beta$ such that $\hat{d} = d_\beta^*(1)$, and by Theorem \ref{PSSBeta}, $\{|P_{X_\beta}(n)|\} = \{\Phi_{\hat{d}}(n)\} = \{\Phi_d(n)\}$.
\end{proof}

\begin{proposition}\label{PlusOne}
Let $\{\Phi_d(n)\}$ be the complexity sequence of any right-infinite sequence $d$. Then the sequence $\{\Phi_d(n)+1\}$ is the predecessor set sequence of $X_\beta$ for some $\beta > 1$.
\end{proposition}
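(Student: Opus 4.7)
The plan is to reduce to Proposition \ref{achievable} by building, from an arbitrary right-infinite sequence $d$, a new sequence $d'$ on a slightly enlarged alphabet such that (i) $\Phi_{d'}(n) = \Phi_d(n)+1$ for all $n$, and (ii) $d'$ satisfies the hypotheses of Lemma \ref{twoprops}. The key device is to prepend a brand-new maximum symbol to a (mildly adjusted version of) $d$, so that $d'$ lexicographically dominates all of its shifts automatically just by virtue of this first letter.

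First I would preprocess $d$ to eliminate a possible $0^\infty$-tail. If $d$ does not end in an infinite string of zeros, set $\hat{d} = d$; otherwise, as in the proof of Proposition \ref{achievable}, let $\hat{d}$ be obtained from $d$ by incrementing every letter by one. Since incrementing all letters is a bijection of the alphabet and hence of the set of length-$n$ words, the complexity sequence is preserved, i.e.\ $\Phi_{\hat{d}}(n) = \Phi_d(n)$; and the resulting sequence contains no occurrence of $0$, hence certainly does not end in $0^\infty$.

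Next, let $m$ be the largest symbol appearing in $\hat{d}$ and set $d' = (m+1)\,\hat{d}$, i.e.\ the right-infinite sequence whose zeroth letter is the fresh symbol $m+1$ followed by $\hat{d}$. I would then check that $d'$ satisfies the hypotheses of Lemma \ref{twoprops}: for any $i \geq 1$, the shift $\sigma^i(d')$ is a suffix of $\hat{d}$, hence begins with a letter $\leq m < m+1$, so $\sigma^i(d') \prec d'$ lexicographically, while of course $\sigma^0(d') = d'$; and $d'$ inherits from $\hat{d}$ the property of not terminating in $0^\infty$. Lemma \ref{twoprops} therefore yields some $m+1 < \beta \leq m+2$ with $d' = d_\beta^*(1)$. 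Finally, I would count $\Phi_{d'}(n)$: any length-$n$ subword of $d'$ either lies entirely in the $\hat{d}$ portion (contributing exactly the $\Phi_{\hat{d}}(n) = \Phi_d(n)$ subwords of $\hat{d}$) or includes coordinate $0$ (contributing exactly the single prefix $(d')_n$, which is distinguished from every subword of $\hat{d}$ by its initial letter $m+1$). Hence $\Phi_{d'}(n) = \Phi_d(n)+1$, and Theorem \ref{PSSBeta} gives $\{|P_{X_\beta}(n)|\} = \{\Phi_{d'}(n)\} = \{\Phi_d(n)+1\}$.

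The only nonroutine point is the $0^\infty$-tail case, where $d'$ would otherwise fail the hypothesis of Lemma \ref{twoprops}; handling it via the digit-shift trick is what forces the ``$+1$ alphabet expansion'' construction to work in full generality. Once that is disposed of, the prepending step is essentially forced, and verifying both the lexicographic domination and the complexity count is straightforward bookkeeping.
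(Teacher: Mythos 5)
Your proposal is correct and follows essentially the same route as the paper: preprocess to remove a possible $0^\infty$-tail by incrementing every digit, prepend a fresh symbol larger than anything in $d$ so that Lemma \ref{twoprops} applies automatically, and observe that each length contributes exactly one new word (the prefix containing the new symbol), giving $\Phi_{d'}(n) = \Phi_d(n)+1$ and hence the claim via Theorem \ref{PSSBeta}. The only cosmetic difference is that the paper frames the argument under the hypothesis that $d$ fails the shift-domination property, whereas your version makes explicit that the construction works uniformly for every $d$.
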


\begin{proof}
Suppose $d$ does not satisfy the property that for all $i \in \mathbb{N}$, $\sigma^i(d) \preceq d$. (We may assume that we have already added 1 to each digit, if necessary, so that $d$ does not end in a string of 0's). Then create a new sequence $\tilde{d}$ so that $\sigma(\tilde{d}) = d$, but the first letter of $\tilde{d}$ is lexicographically greater than any symbol in $d$. Then $\tilde{d}$ satisfies the hypotheses of Lemma \ref{twoprops} and so there exists $\beta$ such that $\tilde{d} = d_\beta^*(1)$, and by Theorem \ref{PSSBeta}, $\{|P_{X_\beta}(n)|\} = \{\Phi_{\tilde{d}}(n)\}$. For each length $n$, $\tilde{d}$ will have one more n-letter word than the number occuring in $d$. Specifically, the first $n$ letters of $\tilde{d}$ cannot occur in $d$ because the first symbol of $\tilde{d}$ is not in the alphabet of $d$. Moreover, any word occuring in $\tilde{d}$ which does not see the first letter of $\tilde{d}$ is necessarily a subword of $d$, as $\sigma(\tilde{d}) = d$. Therefore, for any $n \in \mathbb{N}$, $\{\Phi_{\tilde{d}}(n)\} = \{\Phi_d(n) + 1\}$.
\end{proof}

Finally, we explore the extender set sequences of non-sofic $\beta$-shifts.
\begin{lemma}\label{wordsind}
Let $X_\beta$ be non-sofic. Then for any distinct words $w, v \in L_n(X_\beta)$ with $w \in L_n(d_\beta^*(1))$--that is, $w$ is a word actually appearing in the sequence $d_\beta^*(1)$--$w$ and $v$ have distinct extender sets.
\end{lemma}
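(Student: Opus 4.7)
The plan is to construct, for distinct $w \in L_n(d_\beta^*(1))$ and $v \in L_n(X_\beta)$, an explicit extender in $E(w)\triangle E(v)$. Write $d = d_\beta^*(1)$, pick $j \geq 0$ with $w = d_{[j,\, j+n-1]}$, and set $s = d_{[0,\, j-1]}$ (empty when $j = 0$). The construction splits on the lexicographic comparison of $v$ and $w$.

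If $v \succ w$, let $u$ be any nonempty prefix of $\sigma^{j+n}(d)$; then $swu$ is a prefix of $d$ and therefore admissible, giving $(s,u) \in E(w)$, while $svu$ differs from $swu$ only in the middle block and strictly exceeds it at the first position where $v$ beats $w$, so $svu \notin L(X_\beta)$ and $(s,u) \notin E(v)$.

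If $v \prec w$, I first appeal to Theorem \ref{PSSBeta}: since $P$ is a projection of $E$, it would suffice to have $P(v) \neq P(w)$. By that theorem $P(w)$ is determined by $\{u \in L_n(d) : u \succeq w\}$, and since $w \in L_n(d)$ this set strictly contains the analogous set for $v$ whenever there is a word of $L_n(d)$ strictly between $v$ and $w$; in particular this handles the case $v \in L_n(d)$ (take the witness $u = v$). The only remaining case is $v \notin L_n(d)$ with $w$ the lex-smallest element of $L_n(d)$ satisfying $w \succeq v$. For this case I build a distinguishing $u$ from the non-soficity: $d$ is not eventually periodic (\cite{Blanchard}), so $\sigma^{j+n}(d) \prec d$ strictly; letting $p \geq 0$ be the first index with $d_{j+n+p} < d_p$, set $u = d_{[j+n,\, j+n+p-1]}\cdot d_p$, which coincides with the prefix $d_{[0,\, p]}$ of $d$. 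Then $u$ strictly exceeds $\sigma^{j+n}(d)$ at position $p$, forcing $swu \succ d$ and hence $(s,u) \notin E(w)$. Meanwhile $u$ is itself a prefix of $d$ (so $u \preceq d$), and the chain $v \prec w \preceq d_{[0,\, n-1]}$ forces $svu \prec d$ at shift $0$; the remaining shifts of $svu$ are then verified via Parry's suffix characterization of $L(X_\beta)$ combined with the strict inequalities $\sigma^\ell(d) \prec d$ supplied by non-periodicity.

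The main obstacle is verifying that every shift of $svu$ in this last sub-case stays $\preceq d$, particularly at shifts passing through suffixes of $v$ which happen to equal prefixes of $d$; those reduce to comparing $u = d_{[0,\, p]}$ against $d_{[\ell,\, \ell+p]}$ for various $\ell$, and may require choosing $j$ to be a later occurrence of $w$ in $d$ in order to secure the needed strict inequality. The non-periodicity of $d$ is precisely what makes each such comparison come out the right way, and it is this ingredient that ultimately separates $E(w)$ from $E(v)$.
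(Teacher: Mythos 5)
Your first two cases are sound: for $v \succ w$ the pair $(s,u)$ with $sw$ a prefix of $d$ cleanly separates the extender sets, and for $v \prec w$ with $v \in L_n(d)$ the predecessor sets already differ (this is exactly how the paper handles two words that both occur in $d_\beta^*(1)$). The gap is in the case you yourself call the only remaining one, and you have correctly located where it lives: you must show $svu \in L(X_\beta)$, and the shifts of $svu$ beginning inside a suffix of $v$ that equals a prefix $(d)_m$ of $d$ require $(d)_{p+1} \preceq (\sigma^m(d))_{p+1}$. Since $\sigma^m(d) \preceq d$ always holds, this forces equality $(\sigma^m(d))_{p+1} = (d)_{p+1}$, i.e.\ the first disagreement between $\sigma^m(d)$ and $d$ must occur strictly after position $p$. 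Nothing in your construction guarantees this, and the proposed repair---replacing $j$ by a later occurrence of $w$ so that $p$ becomes small enough---is neither carried out nor clearly available: every occurrence of $w$ in $d$ could be followed by the same long prefix of $d$, which pins $p$ at or above the first-disagreement index of some $\sigma^m(d)$ and makes the corresponding shift of $svu$ exceed $d$. Note also that the problematic $m$ genuinely arise: if $v$ had no nonempty suffix equal to a prefix of $d$ there would be nothing to check, but $v \notin L_n(d)$ does not preclude such suffixes.

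The paper closes this case with an ingredient your argument never invokes: synchronization. It first reduces to $P(w)=P(v)$ \emph{and} $F(w)=F(v)$ (if either differs, the extender sets differ trivially), and then uses the fact that a word $v$ not appearing in $d_\beta^*(1)$ is a synchronizing word of $X_\beta$. Taking $u$ to be the shortest word with $uw$ a prefix of $d_\beta^*(1)$ and $z \in F(w)$ lexicographically strictly larger than the continuation of $d_\beta^*(1)$ past $uw$ (such $z$ exists precisely because $X_\beta$ is non-sofic), one gets $(u,z) \notin E(w)$; on the other hand $u \in P(v)$ and $z \in F(v)$, and synchronization glues the legal words $uv$ and $vz$ into the legal word $uvz$, so $(u,z) \in E(v)$. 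That single appeal to synchronization replaces the shift-by-shift legality check that your construction cannot complete. Either import that mechanism or supply an actual proof that a suitable occurrence $j$ of $w$ exists; as written, the proposal does not prove the lemma.
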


\begin{proof}
If both words $w$ and $v$ appear in $d_\beta^*(1)$, $w$ and $v$ will have different predecessor sets--there will exist some n-letter word in $d_\beta^*(1)$ which one of the words is lexicographically less than or equal to but the other is not, namely, whichever of $w$ and $v$ is lexicographically lesser. Thus in this case, $w$ and $v$ have different extender sets. On the other hand, suppose $w \in L_n(d_\beta^*(1))$ and $v \notin L_n(d_\beta^*(1))$. If $v$ and $w$ have different follower or predecessor sets, they trivially have different extender sets, so suppose $P(w) = P(v)$ and $F(w) = F(v)$. (This implies that $w$ and $v$ end with the same maximal prefix of $d_\beta^*(1)$, and, by the above argument, that $v \prec w$. Note that obviously any digits on which $v$ and $w$ differ must occur before their shared suffix). Let $u$ be the shortest finite word such that $uw$ is a prefix of $d_\beta^*(1)$. \newline
\indent Choose a word $z \in F(w)$ which is lexicographically strictly larger than $(\sigma^{|uw|}(d_\beta^*(1)))_{|z|}$. Such a $z$ must exist: If there does not exist a legal word $z \in F(w)$ which is lexicographically strictly larger than $(\sigma^{|uw|}(d_\beta^*(1)))_{|z|}$, then either $w$ is the first $n$ letters of $d_\beta^*(1)$, in which case the requirement that $v$ and $w$ end in the same maximal prefix of $d_\beta^*(1)$ forces $v = w$, or else $(\sigma^{|uw|}(d_\beta^*(1)))_{|z|}$ is the lexicographically largest legal $|z|$-letter word in $X_\beta$, meaning that $(\sigma^{|uw|}(d_\beta^*(1)))_{|z|} = (d_\beta^*(1))_{|z|}$. But since this must be true for every possible value of $|z|$, we have that $\sigma^{|uw|}(d_\beta^*(1)) = d_\beta^*(1)$, and so $X_\beta$ is sofic. \newline
\indent After choosing such a $z$, it is clear that $(u,z)$ is not in the extender set of $w$. However, $(u,z)$ is in the extender set of $v$: $v$ is a synchronizing word, and since $P(w) = P(v)$ and $F(w) = F(v)$, $uv$ and $vz$ are legal, so $uvz$ is legal. 
\end{proof}

Note that, in the sofic case, if only one of $w$ and $v$ is in $L_n(d_\beta^*(1))$, $w$ and $v$ \textit{may} still have the same extender set:

\begin{example}\label{GMS2}
Recall that for the Golden Mean shift of Example \ref{GMS}, the one-sided shift is realizable as a $\beta$-shift for $\beta = \varphi = \frac{1+ \sqrt{5}}{2}$. In such a case, we have $d_\varphi^*(1) = .1010101010...$, a periodic sequence. It is clear that for a sequence $z$ on $\{0,1\}^{\mathbb{N}}$, having $\sigma^i(z) \preceq d_\varphi^*(1) \> \forall \> i \in \mathbb{N}$ is equivalent to $z$ having no two adjacent 1's. Moreover, the word $000$ clearly has the same extender set as the word $010$, despite $010$ appearing in $d_\varphi^*(1)$, showing that the result of Lemma \ref{wordsind} does not hold in the sofic case.
\end{example}

\begin{theorem}\label{nonsoficformula}
Let $X_\beta$ be a non-sofic $\beta$-shift. For $n \in \mathbb{N}$, let the classes $S_k, 0\leq k \leq n$, be as defined in the proof of Theorem \ref{UBFSSbeta}. Let $\eta(w,k): L_n(X_\beta) \times \{0, 1, ...n\} \rightarrow \{0,1\}$ be defined by 
$$\eta (w,k) = \begin{cases}
1 & \text{if } \exists \> v \in S_k, v\neq w \text{ s.t. } P_{X_\beta}(v) = P_{X_\beta}(w)\\
0 & \text{otherwise}.
\end{cases}
$$
Then for any $n \in \mathbb{N}$, $|E_{X_\beta}(n)| = \displaystyle \Phi_n(d_\beta^*(1)) + \sum_{w \in L_n(d_\beta^*(1))} \Big( \sum_{k=0}^n \eta (w,k) \Big)$.
\end{theorem}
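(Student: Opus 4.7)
The approach is to count extender sets by partitioning them according to (follower set, predecessor set) pairs and then using the two classification results already in hand. Recall that two words have the same extender set if and only if they share both a follower set and a predecessor set. In the non-sofic case, Corollary \ref{betacorrollary} forces every class $S_k$ to yield a distinct follower set, since the shifts $\sigma^k(d_\beta^*(1))$ are pairwise distinct when $d_\beta^*(1)$ is not eventually periodic. Meanwhile, the proof of Theorem \ref{PSSBeta} exhibits a bijection between the predecessor sets of length-$n$ words and the elements of $L_n(d_\beta^*(1))$: each length-$n$ word $v$ satisfies the nested family of ``conditions'' $v \preceq (\sigma^k(d_\beta^*(1)))_n$, and the predecessor set of $v$ has as its canonical representative the lex-smallest element of $L_n(d_\beta^*(1))$ that is $\succeq v$. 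In particular, every $u \in L_n(d_\beta^*(1))$ is the representative of its own predecessor set.

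Combining these two classifications, extender sets of length-$n$ words are in bijection with pairs $(k,u)$ with $0 \le k \le n$ and $u \in L_n(d_\beta^*(1))$ such that some word $v \in S_k$ satisfies $P_{X_\beta}(v) = P_{X_\beta}(u)$. So the plan is to count such pairs.

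For each fixed $u \in L_n(d_\beta^*(1))$, let $k_u$ denote the unique class containing $u$. The pair $(k_u, u)$ is always realized, because $u$ itself is a witness; summing this trivial contribution over $u$ gives exactly $\Phi_n(d_\beta^*(1))$. For $k \ne k_u$, the pair $(k,u)$ is realized exactly when some $v \in S_k$ satisfies $P_{X_\beta}(v) = P_{X_\beta}(u)$, and any such $v$ is automatically distinct from $u$ (different class), so this condition is precisely $\eta(u,k) = 1$. To collapse both contributions into the single sum $\sum_{k=0}^n \eta(u,k)$ appearing in the theorem, the crucial remaining step is to show that $\eta(u, k_u) = 0$ in the non-sofic case: if some $v \in S_{k_u}$ with $v \ne u$ had $P_{X_\beta}(v) = P_{X_\beta}(u)$, then $v$ and $u$ would share both a follower set (same class) and a predecessor set, hence an extender set, contradicting Lemma \ref{wordsind}, which guarantees that any word in $L_n(d_\beta^*(1))$ has an extender set distinct from that of every other length-$n$ word.

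The hard part is really just this last verification via Lemma \ref{wordsind}; everything else is careful accounting. I expect the only minor subtlety to be in cleanly extracting the bijection between predecessor sets of length-$n$ words and $L_n(d_\beta^*(1))$ from the proof of Theorem \ref{PSSBeta}, since this is implicit rather than stated as a separate lemma.
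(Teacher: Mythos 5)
There is a genuine gap, and it sits exactly at the step you yourself flagged as crucial. Your opening premise --- that two words have the same extender set \emph{if and only if} they share both a follower set and a predecessor set --- is false in general; only the forward implication holds. The reverse implication is valid for synchronizing words (here, length-$n$ words \emph{not} occurring in $d_\beta^*(1)$), but Lemma \ref{wordsind} shows precisely that it can fail otherwise: its proof assumes $P(w)=P(v)$ and $F(w)=F(v)$ with $w \in L_n(d_\beta^*(1))$ and $v \notin L_n(d_\beta^*(1))$, and then exhibits an element of $E(v)\setminus E(w)$. Consequently your claimed bijection between extender sets and realized pairs $(k,u)$ is not a bijection, and your argument that $\eta(u,k_u)=0$ is circular: you derive a ``contradiction'' with Lemma \ref{wordsind} by invoking the very implication (same $F$ and same $P$ imply same $E$) that the lemma's proof refutes. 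Nothing rules out $\eta(u,k_u)=1$; when it occurs, the witness $v\in S_{k_u}$ with $v \notin L_n(d_\beta^*(1))$ and $P(v)=P(u)$ satisfies $E(v)\neq E(u)$ by Lemma \ref{wordsind}, so $E(v)$ is an additional extender set that your pair-count misses --- and it is exactly the term $\eta(u,k_u)$ in the theorem's double sum, which your computation would delete.

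The repair is the paper's decomposition: count extender sets of words in $L_n(d_\beta^*(1))$ and of words outside it separately. Lemma \ref{wordsind} gives exactly $\Phi_n(d_\beta^*(1))$ extender sets of the first kind, each distinct from \emph{every} other extender set of length $n$. Words outside $L_n(d_\beta^*(1))$ are synchronizing, so among them equality of predecessor and follower sets does imply equality of extender sets; their extender sets are therefore in bijection with the realized pairs (predecessor set, class $S_k$), where predecessor sets are indexed by $L_n(d_\beta^*(1))$ (Theorem \ref{PSSBeta}) and follower sets by the classes (Corollary \ref{betacorrollary}). Since distinct words of $L_n(d_\beta^*(1))$ have distinct predecessor sets, any witness $v\neq w$ for $\eta(w,k)=1$ automatically lies outside $L_n(d_\beta^*(1))$, so the double sum counts exactly these pairs, including the case $k=k_w$. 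The rest of your accounting (the $k\neq k_u$ cases and the identification of predecessor sets with elements of $L_n(d_\beta^*(1))$) is sound and matches the paper.
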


\begin{proof}
Let $X_\beta$ be a non-sofic $\beta$-shift and $n \in \mathbb{N}$. By Lemma \ref{wordsind}, for any word $w$ in $L_n(d_\beta^*(1))$, and any word $v \in L_n(X_\beta), v \neq w$ implies $E_{X_\beta}(w) \neq E_{X_\beta}(v)$. Thus $X_\beta$ has $\Phi_n(d_\beta^*(1))$ distinct extender sets corresponding to words in $L_n(d_\beta^*(1)$, and moreover, any words not in $L_n(d_\beta^*(1))$ will have extender sets distinct from those of words in $L_n(d_\beta^*(1))$. Also, for any $\beta$-shift $X_\beta$ (sofic or not), if $w$ and $v$ both fail to be in $L_n(d_\beta^*(1))$, then $w$ and $v$ are both synchronizing, and so $P(w) = P(v)$ and $F(w) = F(v)$ is sufficient to prove $E(w) = E(v)$. Thus, the number of additional extender sets in $X_\beta$ corresponding to words not in $L_n(d_\beta^*(1))$ is the number of pairings of predecessor and follower sets achievable by those words. By Thm. ~\ref{PSSBeta}, each predecessor set is represented by some $w \in L_n(d_\beta^*(1))$. By Thm. ~\ref{UBFSSbeta}, the follower set of a word is determined by the class $S_k, \ 0 \leq k \leq n$, in which it lives. Hence the total number of possible pairings of predecessor and follower sets achievable by words not in $L_n(d_\beta^*(1))$ may be expressed by $\displaystyle \sum_{w \in L_n(d_\beta^*(1))} \Big( \sum_{k=0}^n \eta (w,k) \Big)$.
\end{proof}

\begin{remark}\label{soficincl}
The formula given in Theorem \ref{nonsoficformula} is an upper bound on the extender set sequence for sofic $\beta$-shifts. The only difference between the two is that in the sofic case, some of the extender sets of words in $L_n(d_\beta^*(1))$ may be the same as extender sets of words not in $L_n(d_\beta^*(1))$, so the formula may have overcounted.
\end{remark}



Though the formula in Theorem \ref{nonsoficformula} is exact, the following bounds on the extender set sequence of a non-sofic $\beta$-shift are simpler to express:

\begin{corollary}\label{ESBounds}
For any non-sofic $\beta$-shift $X_\beta$, $n \in \mathbb{N}$, $\Phi_n(d_\beta^*(1)) \leq |E_{X_\beta}(n)| \leq (n+1)\Phi_n(d_\beta^*(1))$.
\end{corollary}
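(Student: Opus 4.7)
The plan is to read both bounds directly off the exact formula of Theorem~\ref{nonsoficformula},
\[
|E_{X_\beta}(n)| \;=\; \Phi_n(d_\beta^*(1)) \;+\; \sum_{w \in L_n(d_\beta^*(1))} \sum_{k=0}^{n} \eta(w,k).
\]
The lower bound is immediate: since each $\eta(w,k) \in \{0,1\}$ is nonnegative, the double sum contributes at least $0$ and $|E_{X_\beta}(n)| \geq \Phi_n(d_\beta^*(1))$. Conceptually this is just recording the $\Phi_n(d_\beta^*(1))$ pairwise distinct extender sets of words in $L_n(d_\beta^*(1))$ supplied by Lemma~\ref{wordsind}.

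For the upper bound the key observation will be that the $k=n$ column of the double sum contributes nothing, i.e., $\eta(w,n) = 0$ for every $w \in L_n(d_\beta^*(1))$. I would argue this as follows: recall from the proof of Theorem~\ref{UBFSSbeta} that $S_n$ is a singleton, consisting precisely of the length-$n$ prefix $(d_\beta^*(1))_n$ of $d_\beta^*(1)$, which itself lies in $L_n(d_\beta^*(1))$. If $w = (d_\beta^*(1))_n$ then $S_n$ contains no $v \neq w$, so $\eta(w,n)=0$ trivially. Otherwise the only candidate $v \in S_n$ is $(d_\beta^*(1))_n$, and $v, w$ are two \emph{distinct} words in $L_n(d_\beta^*(1))$; but the proof of Theorem~\ref{PSSBeta} parametrizes the $\Phi_n(d_\beta^*(1))$ predecessor sets of $X_\beta$ bijectively by the length-$n$ subwords of $d_\beta^*(1)$ (distinct such subwords satisfy strictly different collections of the nested ``conditions'' defined there, so they give rise to strictly different predecessor sets), and hence $P(v) \neq P(w)$, forcing $\eta(w,n)=0$ in this case as well.

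Once $\eta(\,\cdot\,,n)\equiv 0$ is in hand, each inner sum has at most $n$ nonzero terms, so $\sum_{w}\sum_{k=0}^{n}\eta(w,k) \leq n\,\Phi_n(d_\beta^*(1))$, and the exact formula yields
\[
|E_{X_\beta}(n)| \;\leq\; \Phi_n(d_\beta^*(1)) + n\,\Phi_n(d_\beta^*(1)) \;=\; (n+1)\,\Phi_n(d_\beta^*(1)).
\]
The only real obstacle is pinning down cleanly the bijection $L_n(d_\beta^*(1)) \to P_{X_\beta}(n)$ implicit in the proof of Theorem~\ref{PSSBeta}; beyond that, the bookkeeping is routine and no new ideas are needed.
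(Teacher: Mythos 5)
Your proposal is correct and follows essentially the same route as the paper: both bounds are read off the exact formula of Theorem~\ref{nonsoficformula}, with the upper bound coming from the observation that $S_n$ is the singleton $\{(d_\beta^*(1))_n\}$ so that $\eta(w,n)=0$ for every $w$, leaving at most $n$ nonzero terms in each inner sum. The only cosmetic differences are that the paper gets the lower bound from $|E_{X_\beta}(n)|\ge |P_{X_\beta}(n)|$ rather than from nonnegativity of $\eta$, and that you spell out (via the distinctness of predecessor sets of distinct words in $L_n(d_\beta^*(1))$, as in Lemma~\ref{wordsind}) the case $w\neq (d_\beta^*(1))_n$, which the paper leaves implicit.
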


\begin{proof}
Clearly for any $n \in \mathbb{N}$, $|E_{X_\beta}(n)| \geq |P_{X_\beta}(n)| = \Phi_n(d_\beta^*(1))$, proving the first inequality. For the second inequality, we use the equation from Corollary ~\ref{nonsoficformula}: 
$$|E_{X_\beta}(n)| = \displaystyle \Phi_n(d_\beta^*(1)) + \sum_{w \in L_n(d_\beta^*(1))} \Big( \sum_{k=0}^n \eta (w,k) \Big)$$
Recall that the class $S_n$ contains only one word, $(d_\beta^*(1))_n$. Therefore, for any $w \in L_n(d_\beta^*(1))$, $\eta(w,n) = 0$. Hence, \newline
$\begin{aligned}
|E_{X_\beta}(n)| &= \displaystyle \Phi_n(d_\beta^*(1)) + \sum_{w \in L_n(d_\beta^*(1))} \Big( \sum_{k=0}^{n-1} \eta (w,k) \Big)\\
&\leq \displaystyle \Phi_n(d_\beta^*(1)) + \sum_{w \in L_n(d_\beta^*(1))} n\\
&= \displaystyle \Phi_n(d_\beta^*(1)) + (\Phi_n(d_\beta^*(1))\cdot n\\
&= (n+1)\Phi_n(d_\beta^*(1)).
\end{aligned}$ \newline

\end{proof}

\begin{remark}
The above Corollary shows that for a non-sofic $\beta$-shift $X_\beta$, $|E_{X_\beta}(n)| \leq |F_{X_\beta}(n)| \cdot |P_{X_\beta}(n)|$ for any $n \in \mathbb{N}$.
\end{remark}




Two-sided $\beta$-shifts may be constructed from one-sided $\beta$-shifts using the natural extension; while the arguments of this paper are made specifically for one-sided shifts $X_\beta$, the reader may check that the following proposition provides the tools to generalize any of those results to two-sided $\beta$-shifts as well:

\begin{proposition}\label{two-sided} 
For any $\beta > 1$, let $X_\beta$ be the one-sided $\beta$-shift and $\widehat{X}_\beta$ be the two-sided $\beta$-shift corresponding to $\beta$. Then $L(X_\beta) = L(\widehat{X}_\beta)$.
\end{proposition}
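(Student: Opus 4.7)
The proposition asserts the equality of two languages, so my plan is to prove $L(\widehat{X}_\beta) \subseteq L(X_\beta)$ and $L(X_\beta) \subseteq L(\widehat{X}_\beta)$ separately. The first containment is essentially a definitional unpacking, while the second requires a construction using Lemma \ref{safewords}. I expect the main work to lie in showing that an arbitrary legal finite word can be embedded as a subword of a fully legal bi-infinite sequence, and the potentially problematic subwords are the ones that straddle the boundary between the ``artificial'' left extension and the original word.

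For the easy inclusion, suppose $w \in L(\widehat{X}_\beta)$. Then $w$ appears as a finite subword of some bi-infinite sequence $\hat{x} \in \widehat{X}_\beta$. By the definition of the natural extension, every finite subword of $\hat{x}$ lies in $L(X_\beta)$, so in particular $w \in L(X_\beta)$. This direction requires no further machinery.

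For the reverse inclusion, I would take $w \in L(X_\beta)$ and build an explicit $\hat{x} \in \widehat{X}_\beta$ containing $w$. Since $w$ lies in the language of $X_\beta$, it occurs in some right-infinite $z \in X_\beta$; shifting if necessary, I obtain a right-infinite legal sequence $y$ that begins with $w$. I then define the bi-infinite sequence $\hat{x}$ by placing $y$ in the non-negative coordinates and the symbol $0$ in every negative coordinate. The key observation is that the first symbol of $d_\beta^*(1)$ is $\lfloor\beta\rfloor\ge 1$, so no (nonempty) prefix of $d_\beta^*(1)$ is a suffix of any power $0^a$; hence Lemma \ref{safewords} gives $F_{X_\beta}(0^a) = X_\beta$.

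It remains to verify that every finite subword of $\hat{x}$ lies in $L(X_\beta)$. Such a subword is of one of three types: a subword of $y$, which is legal because $y$ is a legal right-infinite sequence; a block $0^n$ living entirely in the negative coordinates, which is legal since $0^\infty \in X_\beta$; or a boundary-crossing block of the form $0^a v$, where $v$ is a prefix of $y$ and therefore lies in $L(X_\beta)$. For the third type, the conclusion $F_{X_\beta}(0^a) = X_\beta$ from Lemma \ref{safewords} immediately gives $0^a v \in L(X_\beta)$. Hence $\hat{x} \in \widehat{X}_\beta$, and since $w$ appears in $\hat{x}$ at positions $0, 1, \dots, |w|-1$, we conclude $w \in L(\widehat{X}_\beta)$. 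The main subtlety is really just this appeal to Lemma \ref{safewords} at the seam, which is the reason one-sided legality is enough to guarantee a bi-infinite legal extension.
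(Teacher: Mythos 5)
Your proof is correct and follows essentially the same route as the paper: both establish $L(\widehat{X}_\beta) \subseteq L(X_\beta)$ definitionally and then handle the reverse inclusion by prepending an infinite string of zeros to a right-infinite legal sequence containing $w$. The only cosmetic difference is that you justify legality at the seam via Lemma \ref{safewords}, whereas the paper argues directly that every shift of $0^k z$ remains lexicographically below $d_\beta^*(1)$; both rest on the same fact that $d_\beta^*(1)$ begins with $\lfloor\beta\rfloor \geq 1$.
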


\begin{proof}
By definition of the natural extension, $L(\widehat{X}_\beta) \subseteq L(X_\beta)$. Let $w \in L(X_\beta)$. Then there exists a one-sided sequence $z$ such that $z \in X_\beta$ and $w$ is a subword of $z$. But then for any $k$, $0^kz \in X_\beta$ as well. (If every shift of $z$ is lexicographically less than or equal to $d_\beta^*(1)$, then surely the same is true of $0^kz$). By taking limits of such sequences, we see that the sequence $0^\infty z$ is an element of $\widehat{X}_\beta$, and so $w \in L(\widehat{X}_\beta)$, completing the proof.
\end{proof}

\section*{acknowledgements} 

The author thanks his advisor, Dr. Ronnie Pavlov, for many helpful conversations.

\bibliography{biblio}{}
\bibliographystyle{plain}

\end{document}